\journal{Theoretical Computer Science}
\newcommand{\R}{\mathbb{R}}
\newcommand{\TA}{\mathcal{A}}
\newcommand{\matrixnotation}[1]{\mathbf{#1}}
\newcommand{\A}{\matrixnotation{A}}
\newcommand{\vectornotation}[1]{\mathbf{#1}}
\newcommand{\x}{\vectornotation{x}}
\newcommand{\y}{\vectornotation{y}}
\newcommand{\g}{\vectornotation{g}}
\renewcommand{\a}{\vectornotation{a}}
\renewcommand{\b}{\vectornotation{b}}
\renewcommand{\v}{\vectornotation{v}}
\newcommand{\M}{\mathbf{M}}
\newcommand{\B}{\mathbf{B}}
\newcommand{\E}{E}
\newcommand{\Var}{\mathrm{Var}}
\newcommand{\Cov}{\mathrm{Cov}}
\newcommand{\En}[1]{E\left[#1\right]}
\newcommand{\Expt}[1]{E\left[#1\right]}
\newcommand{\Varn}[1]{\mathrm{Var}\left(#1\right)}
\newcommand{\Prn}[1]{\mathrm{Pr}\left(#1\right)}
\newcommand{\tr}{\mathrm{tr}}
\newcommand{\trn}[1]{\mathrm{tr}\left(#1\right)}
\newcommand{\diagn}[1]{\mathrm{diag}\left(#1\right)}
\newcommand{\vtimes}{\bar{\times}}
\newcommand{\e}{\mathbf{e}}
\newcommand{\modeororder}{order}
\newcommand{\nmodeororder}{$N$-\modeororder}
\newcommand{\anordertensor}{an  \nmodeororder~tensor}
\newcommand{\rademacher}{Rademacher}
\newcommand{\Rademacher}{\rademacher}
\newcommand{\iidrade}{\textit{i.i.d.} \rademacher}
\newcommand{\ie}{\textit{i.e.}}
\newcommand\numberthis{\addtocounter{equation}{1}\tag{\theequation}}
\newcommand{\RVar}{z}
\newtheorem{theorem}{Theorem}
\newtheorem{definition}[theorem]{Definition}
\newtheorem{lem}[theorem]{Lemma}
\newtheorem{cor}[theorem]{Corollary}
\begin{document}

\begin{frontmatter}
\title{Stochastic Trace and Diagonal Estimator for Tensors}
\author[aff]{Bhisham Dev Verma}
\ead{bhishamdevverma@gmail.com}
\author[aff1]{Rameshwar Pratap}
\ead{rameshwar@cse.iith.ac.in}
\author[aff2]{Keegan Kang}
\ead{kk054@bucknell.edu}

\affiliation[aff]{organization={Wake Forest University},
            city={Winston-Salem},
            state={North Carolina},
            country={USA}}
\affiliation[aff1]{organization={Indian Institute of Technology Hyderabad},
            state={Telangana},
            country={India}}

\affiliation[aff2]{organization={Bucknell University},
            city={Lewisburg},
            state={ Pennsylvania},
            country={USA}}

\begin{abstract}
We consider the problem of estimating the trace and diagonal entries of \anordertensor~(where $N \geq 2$) under the framework where the tensor can only be accessed through tensor-vector multiplication. The aim is to estimate the tensor's diagonal entries and trace by minimizing the number of tensor-vector queries. The seminal work of Hutchinson~\citep{hutchinson1989stochastic}, and    \citep{bekas2007estimator} give unbiased estimates of the trace and diagonal elements of a given matrix, respectively, using matrix-vector queries. However, to the best of our knowledge, no analogous results are known for estimating the trace and diagonal entries of higher-order tensors using tensor-vector queries.
This paper addresses this gap and presents unbiased estimators for the trace and diagonal entries of a tensor under this model. Our proposed methods can be seen as generalizations of~\citep{hutchinson1989stochastic,bekas2007estimator}, and reduce to their estimators for the matrix when $N=2$. We provide a rigorous theoretical analysis of our proposals and complement it with supporting simulations.
\end{abstract}

\begin{keyword}
Tensors \sep Stochastic estimation \sep Trace estimator\sep Diagonal estimator
\end{keyword}

\end{frontmatter}

\section{Introduction}
The trace and diagonal entries of a matrix $\A \in \mathbb{R}^{d\times d}$ are used in a wide range of applications in various fields, such as triangle counting in graphs ~\citep{avron2010counting}, computing the Estrada index of a graph~\citep{de2007estimating}, quantum chromodynamics~\citep{thron1998pade}, computing the log-determinant~\citep{wainwright2006log, affandi2014learning, cortinovis2021randomized} and many more~\citep{poulin2003testing,chowdhury2021computing,bravyi2021complexity}. 
When the matrix $\A$ is explicitly accessible, retrieving the diagonal entries and computing the trace are straightforward operations. However, in many applications due to computational challenges, the matrix $\A$ cannot be explicitly constructed and can be assessed through only matrix-vector multiplication queries. The common applications of this scenario are where  $\A$ is a transformation of some other matrix $\B$. For example, consider $\B \in \R^{d\times d}$ to be the adjacency matrix of a graph, then $\tr(\B^3)$ is equal to six times the number of triangles in the graph~\citep{avron2010counting,dharangutte2021dynamic}. Further, computing $\A = \B^3$ explicitly to compute the trace requires $O(d^3)$ time, whereas the matrix vector multiplication  $\A\x = \B (\B(\B\x))$ only requires $O(d^2)$ time~\citep{dharangutte2021dynamic}. 

 In this framework, the estimation of the diagonal entries and trace of matrix $\A$ is called implicit or matrix-free diagonal and trace estimations, respectively.
 Moreover,   the exact values of trace and diagonal entries of a matrix $\A \in \R^{d\times d}$ can be computed by performing   $d$ matrix-vector queries
 \begin{align}
     \trn{\A} &= \sum_{i=1}^d  \e_ i^T  \A \e_i, \qquad \text{~and~} \qquad \diagn{\A} = \sum_{i=1}^d \e_i * \A \e_i
 \end{align}    
  where $\e_i$ is $i$-th standard basis vectors of $\R^d$  and $*$ denotes element-wise multiplication. Needless to say this computation is time-consuming when $d$ is large. 
The seminal work of~\cite{hutchinson1989stochastic} addresses this problem by introducing an implicit unbiased estimator for the trace using \rademacher~random variables (Definition~\ref{def:rademacher}). 

\begin{theorem}[\textbf{Hutchinson Trace Estimator~\citep{hutchinson1989stochastic}}]  \label{thm:hutch_trace}
Let $\A \in \R^{d \times d}$ be a symmetric matrix and $\g \in \R^d$ be a random vector whose entries are \iidrade. Then $T:= \g^T \A \g$ is an unbiased estimator of $\trn{\A}$, \textit{i.e.}, $\En{T} = \trn{\A}$ with $\Varn{T} = 2 \left(\|\A\|_{F}^2 - \sum_{j=1}^d a_{j, j}^2 \right)$.
\end{theorem}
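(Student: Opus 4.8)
The plan is to expand the quadratic form coordinate-wise, $T = \g^\top \A \g = \sum_{i,j=1}^d a_{i,j}\, g_i g_j$, and to use only two facts about \iidrade\ entries: each $g_i$ satisfies $\En{g_i}=0$ and $g_i^2 = 1$ (hence $\En{g_i^2}=1$), and distinct coordinates are independent. Together these give $\En{g_i g_j} = \mathbbm{1}[i=j]$, so by linearity of expectation $\En{T} = \sum_{i,j} a_{i,j}\,\En{g_i g_j} = \sum_i a_{i,i} = \trn{\A}$, which settles unbiasedness.

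For the variance I would compute $\En{T^2}$ directly and subtract $(\En{T})^2 = (\trn{\A})^2$. Squaring gives $T^2 = \sum_{i,j,k,l} a_{i,j}a_{k,l}\, g_i g_j g_k g_l$, so everything hinges on the mixed fourth moment $\En{g_i g_j g_k g_l}$. By independence and the facts $\En{g_m}=0$, $g_m^2=1$ (so also $\En{g_m^3}=0$), this moment equals $1$ precisely when each index value occurring among $i,j,k,l$ appears an even number of times, and equals $0$ otherwise. The step requiring the most care — and the main obstacle — is the inclusion–exclusion bookkeeping of the surviving index patterns: the ``all equal'' case $i=j=k=l$, and the three ``two disjoint pairs'' cases $i=j\neq k=l$, $i=k\neq j=l$, $i=l\neq j=k$, where in the latter three the all-equal diagonal must be explicitly excluded so that it is counted exactly once overall.

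Carrying this out yields
\[
\En{T^2} = \sum_i a_{i,i}^2 + \sum_{i\neq k} a_{i,i}a_{k,k} + \sum_{i\neq j} a_{i,j}^2 + \sum_{i\neq j} a_{i,j}a_{j,i}.
\]
The first two sums combine to $\big(\sum_i a_{i,i}\big)^2 = (\trn{\A})^2$; invoking symmetry $a_{i,j}=a_{j,i}$, the last two sums combine to $2\sum_{i\neq j} a_{i,j}^2 = 2\big(\|\A\|_F^2 - \sum_j a_{j,j}^2\big)$. Hence $\En{T^2} = (\trn{\A})^2 + 2\big(\|\A\|_F^2 - \sum_j a_{j,j}^2\big)$, and subtracting $(\trn{\A})^2$ gives $\Varn{T} = 2\big(\|\A\|_F^2 - \sum_j a_{j,j}^2\big)$, as claimed. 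Beyond the combinatorial care in the fourth-moment sum, the argument uses nothing but linearity of expectation and the $\pm 1$ structure of \rademacher\ variables, and symmetry of $\A$ enters only in the final merging of the two off-diagonal sums.
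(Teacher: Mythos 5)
Your proof is correct: the unbiasedness computation via $\En{g_ig_j}=\mathbbm{1}[i=j]$ and the fourth-moment bookkeeping (all-equal plus the three disjoint-pair patterns, with the diagonal counted once) are handled properly, and the symmetry of $\A$ is invoked exactly where it is needed, namely to merge $\sum_{i\neq j}a_{i,j}^2$ with $\sum_{i\neq j}a_{i,j}a_{j,i}$. Note, however, that the paper does not prove this statement itself — it is quoted from Hutchinson — and the closest thing to an in-paper proof is the specialization of Theorem~\ref{thm:trace_estimator_new} (with Corollary~\ref{cor:trace_rademacher_new}) to $N=2$. That route is organized differently from yours: the paper writes $T=\sum_p y_p$ with $y_p = g_p(\A\g)_p$ an unbiased estimate of $a_{p,p}$, computes $\Varn{y_p}=\sum_j a_{j,p}^2-a_{p,p}^2$ and $\Covn{y_p}{y_q}=a_{p,q}a_{q,p}$ separately, and assembles $\Varn{T}=\sum_p\Varn{y_p}+2\sum_{p>q}\Covn{y_p}{y_q}$, which for symmetric $\A$ gives the same $2\left(\|\A\|_F^2-\sum_j a_{j,j}^2\right)$. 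Your direct expansion of $\En{T^2}$ is the more classical and self-contained argument for the matrix case; the paper's per-entry variance/covariance decomposition is what scales cleanly to the tensor setting, where the analogous monomial expansion would be considerably messier. Both are valid, and the underlying moment computations coincide.
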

Later, \cite{bekas2007estimator} extended Hutchinson's method to the diagonal estimation of a $d \times d$ matrix $\A$ by considering the Hadamard product (Definition~\ref{def:hadamard_prod}) of $\g$ and $\A \g$, \ie, $\g * \A\g$.

\begin{theorem}[\textbf{Diagonal entries estimator of a matrix~\citep{bekas2007estimator}}]  \label{thm:hutch_diag}
 Let $\A \in \R^{d\times d}$ be a matrix, and $\g \in \R^d$ be a random vector whose entries are \iidrade. Then each entry of the vector $\vectornotation{d} := \g * \A \g \in \R^d$ is an unbiased estimator of the diagonal entries of $\A$, \textit{i.e.}, for $i\in [d]$    $\En{d_i} =  a_{i,i}$ with variance $\Varn{d_i} =  \|\a_{i}\|^2 - a_{i,i} $ where $\a_{i}$ denotes the $i$-th row of the matrix $\A$.
\end{theorem}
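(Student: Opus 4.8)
The plan is to argue one coordinate at a time and reduce everything to the elementary moment identities for \rademacher\ variables: $\En{g_j}=0$, $g_j^2=1$ almost surely, and independence across coordinates, so that $\En{g_j g_k}=\delta_{j,k}$. First I would write the $i$-th entry of $\vectornotation{d}$ explicitly. Since $(\A\g)_i=\sum_{j=1}^d a_{i,j}g_j$, we get
\begin{equation*}
  d_i \;=\; g_i(\A\g)_i \;=\; \sum_{j=1}^d a_{i,j}\,g_i g_j \;=\; a_{i,i} \;+\; \sum_{j\neq i} a_{i,j}\,g_i g_j ,
\end{equation*}
where the last step uses $g_i^2=1$. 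Taking expectations and using $\En{g_i g_j}=0$ for $j\neq i$ (independence together with zero mean) immediately yields $\En{d_i}=a_{i,i}$, which is the unbiasedness claim for each $i\in[d]$. (Equivalently, one may note $\En{\g\g^{\top}}=I_d$ and read off the diagonal.)

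For the variance, I would work from the centered variable $d_i - a_{i,i}=\sum_{j\neq i}a_{i,j}\,g_i g_j$ obtained above, square it, and take expectations:
\begin{equation*}
  \Varn{d_i} \;=\; \Expt{\Bigl(\sum_{j\neq i} a_{i,j}\,g_i g_j\Bigr)^2} \;=\; \sum_{j\neq i}\sum_{k\neq i} a_{i,j}a_{i,k}\,\Expt{g_i^2 g_j g_k}.
\end{equation*}
Because $g_i^2=1$ and $\En{g_j g_k}=\delta_{j,k}$ for $j,k\neq i$, only the terms with $j=k$ survive, so $\Varn{d_i}=\sum_{j\neq i}a_{i,j}^2=\|\a_i\|^2-a_{i,i}^2$, which is the asserted formula (read $\|\a_i\|^2-a_{i,i}$ in the statement as $\|\a_i\|^2-a_{i,i}^2$).

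There is no genuine obstacle here: the argument is a direct first- and second-moment computation. The only point that requires care is cleanly separating the diagonal term $j=i$ from the off-diagonal terms and using $g_i^2=1$ to make it deterministic, and then tracking which mixed \rademacher\ moments vanish. This same bookkeeping is exactly the part that becomes substantive in the tensor generalization, where products of more than two coordinate variables appear and one must identify precisely which index patterns contribute a nonzero expectation.
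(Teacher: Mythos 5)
Your proof is correct, and it is essentially the same direct first- and second-moment expansion that the paper itself uses (the paper only cites this matrix result, but its own proof of the tensor generalization in Theorem~\ref{thm:diag_rad_new}, specialized to $N=2$ with Rademacher entries, is exactly your bookkeeping of which mixed moments survive). Your reading of the variance as $\|\a_i\|^2 - a_{i,i}^2$ is also right: the statement's $\|\a_i\|^2 - a_{i,i}$ is a typo, as confirmed both by your computation and by the paper's Corollary~\ref{rade_diag_cor} at $N=2$.
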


In modern applications, tensor data structures are prevalent and widely used in many fields, such as graph theory~\citep{shao2015some, farid2022tensor,zhou2023estrada}, quantum computing~\citep{nielsen2001quantum,huang2020predicting}, machine learning~\citep{tao2005supervised,rabanser2017introduction}, signal processing~\citep{sidiropoulos2017tensor}, neuroscience~\citep{mori2006principles,liao2013depression}, computer vision~\citep{aja2009tensors,panagakis2021tensor}.
Further, analogous to the matrix scenario mentioned above, the implicit trace and diagonal estimation can naturally be considered for tensors where the task is to estimate these quantities by minimizing the number of tensor-vector product queries. We can compute the exact values of the trace and diagonal entries of a tensor $\TA \in \R^{d\times \cdots \times d}$ by performing $d$ tensor vector product queries as follows
\begin{align}
\trn{\TA} &= \sum_{i=1}^d  \e_ i^T  \left(\TA  \vtimes_1 \e_i \vtimes_2 \e_i \vtimes_3 \cdots \vtimes_{N-1} \e_i \right), \text{~and~}\\
\diagn{\TA} &= \sum_{i=1}^d \e_i * \left(\TA  \vtimes_1 \e_i \vtimes_2 \e_i \vtimes_3 \cdots \vtimes_{N-1} \e_i \right),
\end{align}
where $\e_i$ is $i$-th standard basis vector of $\R^d$. However, to the best of our knowledge, no analogous extension to Hutchinson's results of matrices~\citep{hutchinson1989stochastic,bekas2007estimator} are known for higher-order tensors that estimate the trace and diagonal entries. We address this problem and give unbiased estimators for the diagonal entries and trace of tensor under the tensor-vector query framework. We summarize our key contributions as follows:\\

   \noindent $\bullet$ \textbf{Contribution 1:} Our first contribution is to propose an unbiased estimator to approximate the diagonal elements of a tensor using the tensor-vector multiplication queries. We define it as follows:
    \begin{definition} \label{def:diag_rad}    
    Let $\TA \in \R^{d \times \cdots \times d}$ be \anordertensor~with each \modeororder~size $d$. Let $\g^{(n)} \in \R^d ~\forall~ n \in [N-1]$ be  random vectors whose entries are \textit{i.i.d.} random variables with mean zero and unit variance. Let $\g := \g^{(1)} * \g^{(2)} * \cdots * \g^{(N-1)}$ where $*$ denotes the element wise product (Definition~\ref{def:hadamard_prod}). Let $\vtimes_n$ denote the mode-$n$ tensor vector multiplication for $n \in [N-1]$. Then, 
    \begin{align}
       \y := \g * \left(\TA \vtimes_1 \g^{(1)} \vtimes_2 \g^{(2)} \vtimes_3 \cdots \vtimes_{N-1} \g^{(N-1)} \right) \label{eq:eqn_est_diag_Rad_intro}
    \end{align}
    gives an estimate of the diagonal entries of $\TA$.
    \end{definition}
   
    In Theorem~\ref{thm:diag_rad_new}, we show that our proposal (Equation~\eqref{eq:eqn_est_diag_Rad_intro}) is an unbiased estimator of the diagonal entries of $\TA$ and provide its variance bound. Further, in Corollaries~\ref{rade_diag_cor} and \ref{Normal_diag_cor}, we provide the concentration bounds on the sample size needed for our estimator to achieve a desired $(\epsilon, \delta)$-approximation of the tensor's diagonal entries when the entries of $\g^{(n)}$  are \textit{i.i.d.} samples from the Rademacher and $\mathcal N(0,1)$ distribution, where  $n \in [N-1]$.\\

\noindent $\bullet$ \textbf{Contribution 2:} We propose an unbiased estimator for computing the tensor trace using the tensor-vector multiplication queries. We define our proposal as follows.
 \begin{definition} \label{def:tr_Rad_tr}
    Let $\TA \in \R^{d \times \cdots \times d}$ be \anordertensor~with each \modeororder~size $d$. Let $\g^{(n)} \in \R^d ~\forall~ n \in [N-1]$ be  random vectors whose entries are \textit{i.i.d.} random variables having mean zero and unit variance. Let $\g := \g^{(1)} * \g^{(2)} * \cdots * \g^{(N-1)}$. Then, 
    \begin{align}
        X:= \g^T \left(\TA \vtimes_1 \g^{(1)} \vtimes_2 \g^{(2)} \vtimes_3 \cdots \vtimes_{N-1} \g^{(N-1)} \right) \label{eq:eqn_est_tr_rad_intro}
    \end{align}
    gives an estimate of the trace of tensor $\TA$.
    \end{definition}

    In Theorem~\ref{thm:trace_estimator_new}, we show that our proposal (Equation~\eqref{eq:eqn_est_tr_rad_intro}) is an unbiased estimator of the trace of the tensor $\TA$ and provide its variance bound.  In Corollaries~\ref{cor:trace_rademacher_new} and \ref{cor:trace_normal_new}, we give the concentration bound on the number of samples required for our proposal to provide an $(\epsilon, \delta)$ - approximation of the tensor trace when the entries of $\g^{(n)}$ are \textit{i.i.d.} samples from Rademacher and $\mathcal N(0,1)$ respectively.\\
    
Note that in this work, we refer to the sum of the diagonal entries of the tensor as a trace of the tensor (Equation~\ref {eq:eqn_tensor_trace}).
Our proposals stated in Definition~\ref{def:diag_rad} and \ref{def:tr_Rad_tr} can be seen as the generalization of diagonal entries estimation of matrices (Theorem~\ref{thm:hutch_diag}) and  Hutchinson's trace estimator (Theorem~\ref{thm:hutch_trace}) to higher order tensors, and simplifies to these estimators when $N=2$ and elements of $\g^{(n)}$ for $n \in [N-1]$ are \textit{i.i.d.} samples from Rademacher distribution. \\

\noindent \textbf{Organization of the paper:} In  Section~\ref{sec:related_work}, we discuss the related work. Section~\ref{sec:preliminaries}, summarises the notations and necessary concepts used in the paper. Section~\ref{sec:analysis}, presents our trace and diagonal entries estimator proposals for tensors with their theoretical analysis.
Section~\ref{sec:experiment}, complements our theoretical analysis via supporting experiments on synthetic datasets. Finally, in Section~\ref{sec:conclusion}, we conclude the discussion followed by some potential open questions of the work.

\section{Related Work} \label{sec:related_work}
The seminal work of~\cite{hutchinson1989stochastic} gives a randomized algorithm called Hutchinson estimator to approximate the trace of a given matrix via matrix-vector multiplication queries. Hutchinson's estimator is based on the observation that for a given matrix $\A\in \R^{d\times d}$, $E[\g^T \A \g] = \tr(\A)$, where $\g  \in \R^d $ whose entries are \textit{i.i.d.} random variables with mean $0$ and variance $1$, or \iidrade~(Theorem~\ref{thm:hutch_trace}). \cite{wong2004computing} suggested that vectors $\g$ can also be taken from the columns of a Hadamard matrix.
\cite{iitaka2004random} generalized Hutchinson's estimator by using random phase vectors with unit magnitude, showing that the resulting estimator has reduced variance compared to Hutchinson's but with increased computational complexity.
Later, \cite{bekas2007estimator} extended the Hutchinson estimator to approximate the diagonal entries of the matrices using matrix-vector queries (Theorem~\ref{thm:hutch_diag}).

The work of ~\cite{avron2011randomized} was the first to give bounds on the number of samples required by the Hutchinson estimator for positive semidefinite matrices to achieve $(\epsilon, \delta)$~approximation (Definition~\ref{def:eps_delta_approx}). Later, \cite{roosta2015improved,skorski2021modern} and \cite{cortinovis2021randomized} also analysed Hutchinson's trace estimator and presented a slightly tighter sample bound compared to \cite{avron2011randomized}. In the context of diagonal estimation, the work of \cite{baston2022stochastic, hallman2023monte} and \cite{dharangutte2023tight} analysed the Hutchinson's diagonal estimator due to \cite{bekas2007estimator} and gave improved concentration bounds to achieve $(\epsilon, \delta)$~approximation. 

 Recently, numerous studies have been proposed to give improved variants of  Hutchinson's estimator. 
\cite{adams2018estimating} applied the control variate method to reduce the variance of Hutchinson's estimator. 
\cite{gambhir2017deflation} and \cite{meyer2021hutch++} used a decomposition approach involving the projection of $\A$ on some matrix $\mathbf{Q}$ which spans $\A$'s top eigenspace to reduce the variance of the Hutchinson estimator.  \cite{meyer2021hutch++} proposed Hutch++ algorithm to estimate the trace of a matrix, which improves the query complexity bound of Hutchinson's trace estimator from $1/\epsilon^2$ matrix vector queries to $ 1/ \epsilon$ matrix vector queries to achieve $(\epsilon, \delta)$ approximation.
Similar to Hutch++, for the diagonal elements estimation problem, \cite{baston2022stochastic} suggested the Diag++ algorithm that achieves a similar improvement in query complexity over \cite{bekas2007estimator}. 
\cite{persson2022improved}  proposed the Nystrom++ algorithm, an improved version of Hutch++ that uses the Nystrom approximation and only requires one pass over the matrix compared to two passes by  Hutch++. \cite{epperly2024xtrace} recently suggested two new methods, \texttt{XTrace} and \texttt{XNysTrace}, which exploited the variance
reduction and the exchangeability principle. These methods achieve errors that are orders of magnitude smaller than  Hutch++. The work of \citep{bujanovic2021norm,meyer2023hutchinson} extended Hutchinson's trace estimator to the Kronecker-matrix-vector oracle model and provided its theoretical analysis.   
 
Recently, the matrix-vector query estimation techniques for trace and diagonal estimation have gained a lot of attention due to their widespread applicability across applications in computational science~\citep{metivier2014full,aster2018parameter}, machine learning~\citep{molchanov2016pruning, eriksson2018scaling}, and optimization~\citep{dauphin2015equilibrated,yao2021adahessian}. To the best of our knowledge, the implicit trace and diagonal estimation methods for tensors have not been studied. 
This work considers this problem and initiates its study.   

\section{Preliminaries} \label{sec:preliminaries}
We use $[d]$ to denote the set $\{1,\ldots,d\}$. We denote tensors by capital calligraphic letters, matrices by upper boldface letters, vectors by lower boldface letters, scalars by normal lowercase letters, and random variables by italics. $\TA \in \R^{d_1 \times \cdots \times d_N}$ denotes \anordertensor~having each \modeororder~size $d_i$ for $i \in [N]$ and we represent its $(i_1, \ldots, i_N)$-th  element  by $a_{i_1, \ldots, i_N}$.  
The order of a tensor is the number of dimensions and is also known as ways or modes. We interchangeably use the terms mode and order to denote the number of dimensions of a tensor.
The Frobenius norm of a general tensor $ \TA \in \R^{d_1 \times \cdots \times d_N}$ is denoted by $ \| \TA\|_{F} := \left(\sum_{(i_1,\ldots, i_N) } a_{i_1, \ldots, i_N}^2\right)^{1/2}$. 
$\M \in \R^{m \times n}$ represents a $m \times n$ matrix and $m_{i,j}$ denotes its $(i,j)$-th element. $\a \in \R^{d}$ denotes a $d$-dimensional vector and $a_i$ represents its $i$-th element.

\begin{definition}[\textbf{Hadamard Product}~\citep{kolda2009tensor}] \label{def:hadamard_prod}
Let $\A, \B \in \R^{m \times n }$. Their Hadamard product  $\A * \B \in \R^{m \times n}$  is defined as follows: $(\A * \B)_{i,j} = a_{i,j} b_{i,j}$ for $i \in [m]$ and $j \in [n]$.
For vectors $\a, \b \in \R^{n}$, $\a * \b = [a_1 b_1, \cdots, a_n b_n] \in \R^{n}$.
\end{definition}

\begin{definition}[\textbf{Diagonal Elements and trace of a Tensor~\citep{kolda2009tensor,qi2017tensor}}] Let $\TA \in \R^{d \times \cdots \times d}$ be \anordertensor~with each \modeororder~size $d$. Then, any element $a_{i_1, \ldots, i_N}$ is a diagonal element of $\TA$ iff $i_1 = i_2 = \ldots = i_N$. Further,  we denote  $\trn{\TA}$ as the trace of tensor $\TA$ and define it as the sum of the diagonal entries ~\citep[Page 22]{qi2017tensor}, 
\begin{align}
    \trn{\TA} = \sum_{i=1}^d a_{i, \ldots, i}. \numberthis \label{eq:eqn_tensor_trace}
\end{align}
\end{definition}

\begin{definition}[\textbf{Mode-$n$ Tensor Vector Multiplication~\citep{kolda2009tensor}}]
    Let $\TA \in \R^{d_1 \times \cdots \times d_N}$ be \anordertensor~and $\x \in \R^{d_n}$ be a vector. Then mode-$n$ tensor vector multiplication is denoted as $\TA \vtimes_n \x \in \R^{d_1\times \cdots \times d_{n-1} \times d_{n+1} \times \cdots \times d_N}$ and defined elementwise as follows:
    \begin{align}
         \left(\TA \vtimes_n \x \right)_{i_1, \ldots, i_{n-1}, i_{n+1}, \ldots, i_N} &= \sum_{i_n =1}^{d_n} a_{i_1, \cdots, i_N} x_{i_n} \nonumber.
    \end{align}
\end{definition}

\begin{definition}[\textbf{\Rademacher~Distribution}] \label{def:rademacher}
A random variable $X$ comes from a \Rademacher~distribution if $X$ takes on  values $\{+1,-1\}$ each with probability $1/2$. We use the term \textit{i.i.d.} Rademacher to denote random variables \textit{i.i.d.} from the \Rademacher~distribution.
\end{definition}

\begin{definition}[\textbf{$(\epsilon, \delta)$ -Approximator}]\label{def:eps_delta_approx}
    A randomized estimator $X$ is said to be a $(\epsilon, \delta)$-approximator of quantity $\xi$ if $ \Prn{|X - \xi| \leq  \epsilon \cdot \xi} \geq 1 - \delta.$
\end{definition}
The following theorem states Hypercontractivity concentration inequality, which is an extension of the Hanson-Wright inequality, and its proof can be found in \citep{schudy2012concentration}. We use it to derive the number of samples required by our proposals (Definitions~\ref{def:diag_rad} and \ref{def:tr_Rad_tr}) to achieve $(\epsilon, \delta)$-approximation.
\begin{theorem}[\textbf{Hypercontractivity Concentration Inequality}~\citep{schudy2012concentration}] \label{thm:hpercontractractivity}
    Consider a degree $q$ polynomial $f(Y) = f(Y_1, \ldots, Y_n)$ of independent centered Gaussian or Rademacher random variables $Y_1, \ldots,Y_n$. Then
    \begin{align}
        \mathrm{Pr}\left[|f(Y) - \E\left[f(Y)\right]| \geq \lambda \right] \leq e^2 \cdot e^{-\left(\frac{\lambda^2}{R \cdot \Var(f(Y))} \right)^{1/q}}
    \end{align}
    where $\Var(f(Y))$ is the variance of the random variable $f(Y)$ and $R>0$ is an absolute constant.    
\end{theorem}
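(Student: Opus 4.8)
The plan is to pass from tail probabilities to moments via hypercontractivity, and then convert the resulting moment bound back into a tail bound using Markov's inequality with an optimally chosen moment order.

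First I would recenter: put $g := f(Y) - \E[f(Y)]$. Subtracting the mean only removes the degree-zero coefficient, so $g$ is again a polynomial of degree at most $q$ in the independent Rademacher or Gaussian variables $Y_1,\dots,Y_n$, with $\E[g]=0$ and $\|g\|_2^2 = \Var(f(Y))$. The crucial ingredient is the hypercontractive moment inequality for low-degree polynomials: for every $p\ge 2$,
\[
  \|g\|_p \;\le\; (p-1)^{q/2}\,\|g\|_2 \;\le\; p^{q/2}\sqrt{\Var(f(Y))},
\]
while for $1\le p\le 2$ one trivially has $\|g\|_p\le\|g\|_2\le p^{q/2}\sqrt{\Var(f(Y))}$ as well; hence $\|g\|_p\le p^{q/2}\sqrt{\Var(f(Y))}$ for all $p\ge 1$. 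For Rademacher variables this is the degree-$q$ Bonami--Beckner inequality, which I would obtain from the two-point $(2,p)$-hypercontractive estimate, tensorize over the $n$ coordinates by induction on $n$, and then read off for a degree-$q$ polynomial by applying the noise operator $T_\rho$ with $\rho=1/\sqrt{p-1}$, which multiplies the degree-$j$ part by $\rho^j$. The Gaussian case follows either by a central-limit limiting argument from the Rademacher case or directly from hypercontractivity of the Ornstein--Uhlenbeck semigroup.

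Next I would convert moments to tails. For any $t\ge 1$, choosing $p=t$ and applying Markov's inequality to $|g|^t$,
\[
  \Prn{\,|g| > e\sqrt{\Var(f(Y))}\;t^{q/2}\,}
  \;\le\; \frac{\E|g|^t}{\big(e\sqrt{\Var(f(Y))}\,t^{q/2}\big)^t}
  \;\le\; \left(\frac{t^{q/2}\sqrt{\Var(f(Y))}}{e\,t^{q/2}\sqrt{\Var(f(Y))}}\right)^{t}
  \;=\; e^{-t}.
\]
Now fix $\lambda>0$. If $\lambda \ge e\sqrt{\Var(f(Y))}$, set $t := \big(\lambda^2/(e^2\Var(f(Y)))\big)^{1/q}\ge 1$, so that $e\sqrt{\Var(f(Y))}\,t^{q/2}=\lambda$ and the display gives $\Prn{|g|\ge\lambda}\le \exp\!\big(-(\lambda^2/(e^2\Var(f(Y))))^{1/q}\big)$. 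If instead $\lambda < e\sqrt{\Var(f(Y))}$, then $(\lambda^2/(e^2\Var(f(Y))))^{1/q}<1$, so $e^2\exp\!\big(-(\lambda^2/(e^2\Var(f(Y))))^{1/q}\big) > e^2 e^{-1} = e >1 \ge \Prn{|g|\ge\lambda}$ and the claimed inequality holds trivially. In both regimes we get the stated bound with the absolute constant $R=e^2$, and the prefactor $e^2$ is precisely what absorbs the small-$\lambda$ case.

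The main obstacle is the hypercontractive moment inequality of the second paragraph: establishing it with a constant uniform in the degree $q$ and valid simultaneously for the Rademacher and Gaussian ensembles. The two-point inequality together with its tensorization over coordinates is the technical heart of the argument, and the Gaussian reduction requires some care about convergence of moments along the central limit approximation. Once that lemma is available, the remaining steps are elementary bookkeeping, the only mild subtlety being the case split at $\lambda = e\sqrt{\Var(f(Y))}$ and verifying that the $e^2$ prefactor handles it.
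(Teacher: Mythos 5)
The paper does not prove this statement at all: it is imported verbatim from Schudy and Sviridenko \citep{schudy2012concentration}, so there is no internal argument to compare against. Your derivation is a correct, self-contained proof of the stated form and follows the standard route for such bounds: recenter so that $g=f(Y)-\E[f(Y)]$ is a degree-$q$ polynomial with $\|g\|_2^2=\Var(f(Y))$, invoke the hypercontractive moment growth $\|g\|_p\le (p-1)^{q/2}\|g\|_2$ (Bonami--Beckner for Rademacher, Ornstein--Uhlenbeck hypercontractivity or a CLT transfer for Gaussian), and optimize Markov's inequality at $t=\bigl(\lambda^2/(e^2\Var(f(Y)))\bigr)^{1/q}$, with the prefactor $e^2$ absorbing the regime $\lambda<e\sqrt{\Var(f(Y))}$; your bookkeeping here checks out and yields the theorem with $R=e^2$. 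Two minor remarks: Markov's inequality already holds with a non-strict inequality, so the strict/non-strict mismatch between your intermediate display and the conclusion is cosmetic; and the cited reference actually establishes a finer multilevel inequality in terms of derivative-tensor norms (a polynomial analogue of Hanson--Wright), of which the variance-only form quoted in the paper is the weaker corollary that your argument reproves directly from hypercontractivity.
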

 
If the estimator is not required to be linear, an alternative way to achieve an $(\epsilon, \delta)$-approximation is the \textit{median-of-means} trick. The following lemma states the result for the median-of-means estimator.
\begin{lem}[\cite{nemirovskij1983problem,lugosimean}] \label{lem:median_of_means}
Let $Y_1, \ldots, Y_{rs}$ be \textit{i.i.d.} random variables with mean $\mu$ and variance $\sigma^2$. Divide the samples into $r$ disjoint groups, each of size $s$, and compute the empirical mean of each group. Let the median-of-means estimator be defined as
\[
\mu_{MM} := \operatorname{median} \left( \frac{1}{s} \sum_{t=1}^{s} Y_t,\ \frac{1}{s} \sum_{t=s+1}^{2s} Y_t,\ \ldots,\ \frac{1}{s} \sum_{t=(r-1)s+1}^{rs} Y_t \right).
\]
Then, for any $\delta \in (0, 1)$, if $r = 8 \log(1/\delta)$, the following is true with probability at least $1 - \delta$ 
\[
|\mu_{MM} - \mu| \leq \sigma \sqrt{ \frac{4}{s} }.
\]
\end{lem}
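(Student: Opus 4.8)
The plan is to decompose the argument into two independent pieces: a second-moment (Chebyshev) bound on each group mean, followed by a Chernoff-type bound on the number of "bad" groups, so that the median inherits the accuracy of the majority of groups. First I would fix notation: write $Z_j := \frac{1}{s}\sum_{t=(j-1)s+1}^{js} Y_t$ for $j \in [r]$ for the $j$-th group mean. Since the $Y_t$ are i.i.d.\ with mean $\mu$ and variance $\sigma^2$, each $Z_j$ has mean $\mu$ and variance $\sigma^2/s$, and the $Z_j$ are themselves i.i.d.\ across $j$. Call group $j$ \emph{good} if $|Z_j - \mu| \le \sigma\sqrt{4/s}$ and \emph{bad} otherwise. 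By Chebyshev's inequality, $\Prn{\text{group } j \text{ bad}} = \Prn{|Z_j - \mu| > \sigma\sqrt{4/s}} \le \frac{\sigma^2/s}{4\sigma^2/s} = \frac14$.

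Next I would observe the key deterministic fact: if strictly fewer than $r/2$ of the groups are bad, then the median of $\{Z_1,\dots,Z_r\}$ must lie within $\sigma\sqrt{4/s}$ of $\mu$, because any value exceeding $\mu + \sigma\sqrt{4/s}$ has at least $\lceil r/2\rceil$ good group-means below it and at least one bad group-mean needed to be the median — more carefully, a median that is $>\mu+\sigma\sqrt{4/s}$ or $<\mu-\sigma\sqrt{4/s}$ forces at least half the $Z_j$ to be on one side of the good interval, i.e.\ forces at least $r/2$ bad groups. So it suffices to bound $\Prn{\#\{\text{bad groups}\} \ge r/2}$. Let $B = \sum_{j=1}^r \mathbbm{1}[\text{group } j \text{ bad}]$; this is a sum of independent Bernoulli variables each with success probability $p \le 1/4$, so $\En{B} \le r/4$. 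A standard Chernoff bound (e.g.\ $\Prn{B \ge 2\En{B}} \le e^{-\En{B}/3}$, or the cleaner form $\Prn{B \ge r/2} \le e^{-r/8}$ obtained from the Chernoff bound with deviation factor $2$ relative to the mean-upper-bound $r/4$) gives $\Prn{B \ge r/2} \le e^{-r/8}$. Setting $r = 8\log(1/\delta)$ makes this probability at most $\delta$, and on the complementary event the median-of-means estimator satisfies $|\mu_{MM} - \mu| \le \sigma\sqrt{4/s}$, as claimed.

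The only mildly delicate step is pinning down the constant in the Chernoff bound so that $r = 8\log(1/\delta)$ exactly suffices; depending on which multiplicative Chernoff inequality one invokes and whether one bounds $p$ by $1/4$ before or after, the constant in the exponent can come out as $1/8$, $1/12$, or similar, all of which are compatible with the stated $r = 8\log(1/\delta)$ (since $e^{-r/8} = \delta$ and any larger exponent constant only helps). I would therefore just cite the multiplicative Chernoff bound in the form $\Prn{B \ge (1+\gamma)\En{B}} \le \exp(-\gamma^2 \En{B}/(2+\gamma))$ with $\En{B} \le r/4$ and $\gamma = 1$, giving $\Prn{B \ge r/2} \le \exp(-r/12) \le \exp(-r/8)$ after noting the slack, or simply note that this is the classical result of \citep{nemirovskij1983problem,lugosimean} and reproduce the standard two-line computation. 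No genuine obstacle is expected; the proof is entirely standard and the constants are the only thing to be careful about.
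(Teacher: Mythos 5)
The paper itself gives no proof of this lemma --- it is quoted directly from the cited references --- so there is no in-paper argument to compare against. Your proposal is the standard proof underlying those references, and its structure is sound: Chebyshev on each group mean $Z_j$ (variance $\sigma^2/s$) gives failure probability at most $1/4$ for the event $|Z_j-\mu|>2\sigma/\sqrt{s}$; the deterministic observation that a median outside the interval forces at least $r/2$ bad groups is correct; and a binomial tail bound on the number of bad groups finishes the argument.

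The one concrete slip is in your closing paragraph about constants: the chain $\Prn{B \ge r/2} \le \exp(-r/12) \le \exp(-r/8)$ has the last inequality backwards, since $-r/12 > -r/8$ and hence $\exp(-r/12) > \exp(-r/8)$. The multiplicative Chernoff form you invoke, $\Prn{B \ge (1+\gamma)\E[B]} \le \exp\left(-\gamma^2 \E[B]/(2+\gamma)\right)$ with $\E[B]\le r/4$ and $\gamma=1$, only yields $\exp(-r/12)$, and with $r = 8\log(1/\delta)$ that is $\delta^{2/3}$, not $\delta$; that route would require $r = 12\log(1/\delta)$. The fix is the additive (Hoeffding) bound you alluded to earlier: since $\E[B] \le r/4$, $\Prn{B \ge r/2} \le \Prn{B - \E[B] \ge r/4} \le \exp\left(-2r(1/4)^2\right) = \exp(-r/8)$, which makes $r = 8\log(1/\delta)$ exactly sufficient (using the upper bound $p\le 1/4$ here is harmless because it only shifts the centering, or can be justified by stochastic domination). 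With that substitution your proof is complete and matches the stated constants.
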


\section{Our estimators and their analysis} \label{sec:analysis}

\subsection{Intuition for our estimators }

{\color{black}
Recall that for a square matrix $\A$ of size $d$,  the Hutchinson trace estimator is defined as $\g^T \A \g $ where $\g \in \R^d$ is a random vector whose entries are \textit{i.i.d} Rademacher random variables. In the case of matrices, each row/column of the matrix contains exactly one diagonal element. The idea of the Hutchinson estimator is to compress each row of the matrix in a one-dimensional summary and then, from these summaries, estimate the corresponding diagonal elements and compute their sum. The Hutchinson trace estimator operation can be considered into two parts: assume that $\v:= \A \g$, $\v \in \R^d$ and the $i$-th element of $\v$ represents the one-dimensional summary of the $i-$th row of the matrix $\A$. Note that $\g^T \v$ is the Hutchinson trace estimator,   and is $\g^T \v=\sum_{i=1}^d g_i v_i$ where $g_i v_i = a_{i,i} \cdot g_{ii}^2 + \sum_{j \in [d] \setminus \{i\}} a_{i,j} \cdot g_j \cdot g_i$. It is easy to verify that $\En{g_i v_i}=a_{i,i} $, and therefore gives an unbiased estimator of $a_{i,i}$. Consequently, $\En{g^T \v}=\sum_{i=1}^d a_{i,i}$. The same idea extends to the Hutchinson diagonal estimator, where instead of computing the sum of the recovered diagonal entries, they are returned in vector form by leveraging the Hadamard product, \ie, $\g^T * \A\g$.

 Our proposals extend the above idea to a higher-order tensor.  In the case of a higher-order tensor, each slice (data subset obtained by fixing one index and letting others free) of the tensor consists of exactly one diagonal element.  Our proposed trace estimator compresses each slice into a one-dimensional summary, recovers the corresponding diagonal element from them, and computes their sum. Let's understand the working of our trace estimator using a $N$-order tensor $\TA \in \R^{d \times \cdots \times d}$. Let $\g^{(1)},\ldots, \g^{(N-1)}$ be $d$-dimensional random vectors whose entries are \textit{i.i.d.} Rademacher and $\g:= \g^{(1)} * \cdots *\g^{(N-1)} $ where $*$ denotes the Hadamard product.  In our proposed trace estimator (Definition \ref{def:tr_Rad_tr}), the operation $\TA \vtimes \g^{(1)} \vtimes \cdots \vtimes \g^{(N-1)}$ results in a $d$-dimension vector, whose $i$-th element presents the one-dimensional summary of the $i$-th slice of the tensor $\TA$ and the operation $\g^T \left(\TA \vtimes \g^{(1)} \vtimes \cdots \vtimes \g^{(N-1)}\right)$  recovers the corresponding diagonal elements from their one-dimensional summaries of the slices and returns their sum. Similarly, this idea leads to diagonal estimation if we compute the Hadamard product of $\g$ with $\left(\TA \vtimes \g^{(1)} \vtimes \cdots \vtimes \g^{(N-1)}\right)$ (Definition~\ref{def:diag_rad}).

}

In this section, we define our estimators for the diagonal entries and trace of the tensor.  Following the definition, we provide a theoretical analysis of our proposals by showing that our estimates are unbiased. Then, we provide bounds on their variance, followed by a concentration analysis.

\subsection{Diagonal entries estimator}

\label{section_diagonal_estimator}
In the following theorem, we give an unbiased estimator to estimate each diagonal element of a tensor $\TA$ and provide a bound on its variance.

\begin{restatable}{theorem}{diagRadnew}~\label{thm:diag_rad_new}
   Let $\TA \in \R^{d \times \cdots \times d}$ be \anordertensor~with each \modeororder~size $d$.     
    Let $\g^{(n)} \in \R^d$ for $n \in [N-1]$ be random vectors where entries are mean zero, have a unit second moment and finite fourth moment, and are pairwise independent, i.e. $\Expt{g^{(n)}_i} = 0, \Expt{\left(g^{(n)}_i\right)^2} = 1, \Expt{\left(g^{(n)}_i\right)^4} < \infty$, $\Expt{g^{(n)}_i g^{(m)}_j} = \Expt{g^{(n)}_i}\Expt{g^{(m)}_j} \forall~m \neq n$ or $i \neq j$. Let $\g := \g^{(1)} * \g^{(2)} * \cdots * \g^{(N-1)}$. Then, each entry of 
    \begin{align}
       \y := \g * \left(\TA \vtimes_1 \g^{(1)} \vtimes_2 \g^{(2)} \vtimes_3 \cdots \vtimes_{N-1} \g^{(N-1)} \right) \label{eq:eqn_est_diag_Rad}
    \end{align}
    gives an unbiased estimate of the diagonal elements of tensor $\TA$, \textit{i.e.} for $i\in[d]$, $\En{y_i}  = a_{i, \ldots, i}$
    with variance
    \begin{align}
        \Varn{y_i} &=\sum_{s=0}^{N-1} \Expt{\RVar^4}^s \left(  \sum_{\scriptsize
\begin{array}{c}
 (j_1, \ldots, j_{N-1})\\
\text{where $s$ of}~j_t, t \in [N-1]\\
\text{are equal to $i$}
\end{array}}    a_{j_1, \ldots, j_{N-1},i}^2   \right)  - a_{i, \ldots, i}^2, \numberthis \label{eq:310324_1}
    \end{align}
 where $\RVar$ is a random variable identically distributed to $g_i^{(n)}$'s, where  $i\in [d]$ and $n \in [N-1]$. For $p \neq q$, covariance
{\color{black}
\begin{align}
    \Cov\left(y_p,y_q\right) = \sum_{\scriptsize\begin{array}{c}
    (j_1,\hdots,j_{N-1}) \in \{p,q\} \\
    \text{and}~(k_1,\hdots,k_{N-1}) \in \{p,q\} \\
    \text{and}~j_t \neq k_t ~\forall~ t \in [N-1]
    \end{array}} a_{j_1,\hdots,j_{N-1},p}a_{k_1,\hdots,k_{N-1},q} - a_{p,\hdots,p}a_{q,\hdots,q}. \label{eq:310324_2}
\end{align}}
\end{restatable}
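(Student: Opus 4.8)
The plan is to work elementwise from the definition of mode-$n$ tensor–vector multiplication. First I would expand the $i$-th entry of $\y$ explicitly. Writing $\v := \TA \vtimes_1 \g^{(1)} \vtimes_2 \cdots \vtimes_{N-1} \g^{(N-1)}$, the definition gives $v_i = \sum_{j_1,\ldots,j_{N-1}} a_{j_1,\ldots,j_{N-1},i}\, g^{(1)}_{j_1}\cdots g^{(N-1)}_{j_{N-1}}$, and hence $y_i = g_i v_i = \left(\prod_{n=1}^{N-1} g^{(n)}_i\right)\cdot \sum_{j_1,\ldots,j_{N-1}} a_{j_1,\ldots,j_{N-1},i}\, \prod_{n=1}^{N-1} g^{(n)}_{j_n}$. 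So $y_i$ is a sum over index tuples $(j_1,\ldots,j_{N-1})$ of terms $a_{j_1,\ldots,j_{N-1},i}\prod_{n=1}^{N-1} g^{(n)}_i g^{(n)}_{j_n}$. For each coordinate $n$, the factor $g^{(n)}_i g^{(n)}_{j_n}$ has expectation $1$ if $j_n=i$ (unit second moment) and $0$ otherwise (mean zero and pairwise independence across distinct indices within the same vector — which is exactly the hypothesis $\Expt{g^{(n)}_i g^{(m)}_j}=\Expt{g^{(n)}_i}\Expt{g^{(m)}_j}$ when $i\neq j$). Since the $N-1$ vectors are mutually independent (they are independent random vectors), the expectation factorizes over $n$, so $\Expt{y_i}$ picks out only the single tuple $(i,i,\ldots,i)$, giving $\Expt{y_i}=a_{i,\ldots,i}$. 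This proves unbiasedness.

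For the variance I would compute $\Expt{y_i^2}$ and then subtract $a_{i,\ldots,i}^2$. Squaring, $y_i^2 = \sum_{(j_1,\ldots,j_{N-1})}\sum_{(k_1,\ldots,k_{N-1})} a_{j_1,\ldots,j_{N-1},i}\, a_{k_1,\ldots,k_{N-1},i}\, \prod_{n=1}^{N-1}\left(g^{(n)}_i\right)^2 g^{(n)}_{j_n} g^{(n)}_{k_n}$. Taking expectation, again by mutual independence across $n$ it suffices to evaluate, for each coordinate $n$, $\Expt{\left(g^{(n)}_i\right)^2 g^{(n)}_{j_n} g^{(n)}_{k_n}}$. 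A short case analysis on how $j_n,k_n$ relate to $i$ and to each other shows this factor is: $\Expt{\RVar^4}$ if $j_n=k_n=i$; $1$ if $j_n=k_n\neq i$ (then $\left(g^{(n)}_i\right)^2$ contributes $1$ and $\left(g^{(n)}_{j_n}\right)^2$ contributes $1$); $1$ if exactly one of $j_n,k_n$ equals $i$ and the other also must equal $i$ for a nonzero contribution — more carefully, if exactly one of $j_n,k_n$ equals $i$ the odd leftover factor forces expectation $0$; and $0$ if $j_n\neq k_n$ with neither equal to $i$, or more generally whenever some index appears an odd number of times. The upshot is that a term survives only when, for every $n$, $j_n=k_n$; and among those surviving diagonal-collapsed terms, if exactly $s$ of the coordinates have $j_n=k_n=i$ the weight is $\Expt{\RVar^4}^s$ (the remaining $N-1-s$ coordinates with $j_n=k_n\neq i$ contributing factor $1$ each). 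Collecting by the value of $s$ yields $\Expt{y_i^2}=\sum_{s=0}^{N-1}\Expt{\RVar^4}^s\sum a_{j_1,\ldots,j_{N-1},i}^2$ over tuples with exactly $s$ of the $j_t$ equal to $i$, and subtracting $a_{i,\ldots,i}^2$ gives \eqref{eq:310324_1}.

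For the covariance $\Cov(y_p,y_q)$ with $p\neq q$, I would write out $\Expt{y_p y_q}$: here $y_p$ carries factors $\prod_n g^{(n)}_p g^{(n)}_{j_n}$ and $y_q$ carries $\prod_n g^{(n)}_q g^{(n)}_{k_n}$. Per coordinate $n$ we need $\Expt{g^{(n)}_p g^{(n)}_{j_n} g^{(n)}_q g^{(n)}_{k_n}}$ with $p\neq q$. Using mean zero, unit second moment and pairwise independence, this is nonzero (and equals $1$) precisely when the multiset $\{p,j_n,q,k_n\}$ pairs up, i.e. $\{j_n,k_n\}=\{p,q\}$ as a set, which since $j_n\neq k_n$ is forced, means $(j_n,k_n)=(p,q)$ or $(q,p)$; equivalently $j_n,k_n\in\{p,q\}$ and $j_n\neq k_n$. (Note $\Expt{\RVar^4}$ never enters because $p\neq q$ means no index is ever squared-then-squared.) Hence $\Expt{y_p y_q}=\sum a_{j_1,\ldots,j_{N-1},p}\,a_{k_1,\ldots,k_{N-1},q}$ over tuples with each $j_t,k_t\in\{p,q\}$ and $j_t\neq k_t$, and subtracting $\Expt{y_p}\Expt{y_q}=a_{p,\ldots,p}a_{q,\ldots,q}$ gives \eqref{eq:310324_2}.

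The main obstacle is the bookkeeping in the per-coordinate moment case analysis for the variance: one must be careful that the only surviving configurations have $j_n=k_n$ for every $n$ (ruling out, e.g., "crossing" patterns where $j_n=i, k_n$ something else), and that the fourth-moment factor appears with exactly the right exponent $s$. I expect the argument to be cleanest if I first prove the general per-coordinate lemma $\Expt{\prod \text{of an even/odd number of } g\text{'s}}$ — namely that a product of coordinates of a single vector $\g^{(n)}$ has expectation $0$ if any index multiplicity is odd, equals $1$ if every multiplicity is exactly $2$, and equals $\Expt{\RVar^4}$ (times $1$'s) for the relevant degree-four patterns — and then invoke mutual independence to multiply over $n$. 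This isolates all the combinatorics into one reusable step and makes the collection-by-$s$ bookkeeping transparent.
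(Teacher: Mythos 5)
Your proposal is correct and follows essentially the same route as the paper's proof: expand $y_i$ elementwise, use the per-coordinate moment facts ($\Expt{g^{(n)}_{j_n} g^{(n)}_i}$, $\Expt{(g^{(n)}_{j_n})^2(g^{(n)}_i)^2}$, $\Expt{g^{(n)}_{j_t}g^{(n)}_{k_t}g^{(n)}_pg^{(n)}_q}$) together with independence across the modes to kill all non-matching index patterns, and collect the surviving diagonal-collapsed terms by the number $s$ of coordinates equal to $i$. Your case analysis (after the self-correction about the "exactly one of $j_n,k_n$ equals $i$" case) matches the paper's, including the implicit use of full independence of the entries when factorizing the higher-order moments, which the paper also relies on beyond its stated pairwise condition.
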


\begin{proof}
    From Equation~\eqref{eq:eqn_est_diag_Rad}, we have
    \begin{align*}
        \y &= \begin{bmatrix}
            y_1\\
            \vdots\\
            y_d
        \end{bmatrix} = \begin{bmatrix}
            g_1 \displaystyle \sum_{j_1, \ldots, j_{N-1}} a_{j_1, \ldots, j_{N-1},1}  \prod_{t=1}^{N-1}g_{j_t}^{(t)}\\
            \vdots \\
            g_d \displaystyle \sum_{j_1, \ldots, j_{N-1}} a_{j_1, \ldots, j_{N-1},d} \prod_{t=1}^{N-1}g_{j_t}^{(t)}\\
        \end{bmatrix}\\
        & = \begin{bmatrix}
        \left(\displaystyle\prod_{s=1}^{N-1} g_1^{(s)}\right) \displaystyle \sum_{j_1, \ldots, j_{N-1}} a_{j_1, \ldots, j_{N-1},1} \prod_{t=1}^{N-1}g_{j_t}^{(t)}\\
            \vdots\\
            \left(\displaystyle\prod_{s=1}^{N-1} g_d^{(s)}\right) \displaystyle \sum_{j_1, \ldots, j_{N-1}} a_{j_1, \ldots, j_{N-1},d} \prod_{t=1}^{N-1}g_{j_t}^{(t)}
        \end{bmatrix}\\
        &= \begin{bmatrix}
            \displaystyle \sum_{j_1, \ldots, j_{N-1}} a_{j_1, \ldots, j_{N-1},1}  \prod_{t=1}^{N-1} g_{j_t}^{(t)}g_1^{(t)}\\
            \vdots\\
            \displaystyle \sum_{j_1, \ldots, j_{N-1}} a_{j_1, \ldots, j_{N-1},d}  \prod_{t=1}^{N-1} g_{j_t}^{(t)}g_d^{(t)} 
        \end{bmatrix}.
    \end{align*}
We first compute the expected value of $y_i$ for $i \in [d]$.
    \begin{align}
        \En{y_i} &=   \sum_{j_1=1}^d \sum_{j_2=1}^d \cdots \sum_{j_{N-1}=1}^d  a_{j_1,\ldots, j_{N-1}, i} \, \prod_{t=1}^{N-1} \En{g_{j_t}^{(t)}g_i^{(t)}}  =  a_{i, \cdots, i}\label{eq:eqn_unb_Est_diag_rad_proof} 
    \end{align}
    where in Equation \eqref{eq:eqn_unb_Est_diag_rad_proof}, we use the following fact
    \begin{align}
        \En{g^{(n)}_{j_{n}} g_{i}^{(n)}} = \begin{cases}
            1, & \text{if } j_n = i\\
            0, & \text{otherwise} 
        \end{cases} \quad \forall ~~ n \in [N-1] \text{ and } j_n, i \in [d], \nonumber
    \end{align}
to deduce that the only non-zero product term occurs when $j_1 =  \hdots = j_{N-1} = i$, leading  to the diagonal element $a_{i,\ldots,i}$. We next compute the variance of $y_i$ for $i \in [d]$.
\begin{align}
        &\Varn{y_i}   = \En{y_i^2} - \En{y_i}^2 \nonumber\\
\begin{split}
        &= \E \Bigg[\sum_{j_1, \ldots, j_{N-1}} a_{j_1, \ldots, j_{N-1},i}^2 \prod_{t=1}^{N-1} (g_{j_t}^{(t)})^2(g_{i}^{(t)})^2  \\
       & \phantom{=} + \qquad \qquad \sum_{\mathclap{(j_1,\ldots, j_{N-1}) \neq (k_1,\ldots, k_{N-1})}} \qquad a_{j_1, \ldots, j_{N-1},i} a_{k_1, \ldots, k_{N-1},i} \prod_{t=1}^{N-1} g_{j_t}^{(t)}g_{k_t}^{(t)}(g_{i}^{(t)})^2 \Bigg]  - a_{i, \ldots, i}^2
\end{split} \nonumber \\
\begin{split}
        &= \sum_{j_1, \ldots, j_{N-1}} a_{j_1, \ldots, j_{N-1},i}^2 \prod_{t=1}^{N-1} \En{(g_{j_t}^{(t)})^2(g_{i}^{(t)})^2}  \\
        & \phantom{=} + \qquad \qquad \sum_{\mathclap{(j_1,\ldots, j_{N-1}) \neq (k_1,\ldots, k_{N-1})}} \qquad a_{j_1, \ldots, j_{N-1},i} a_{k_1, \ldots, k_{N-1},i} \prod_{t=1}^{N-1} \En{ g_{j_t}^{(t)}g_{k_t}^{(t)}(g_{i}^{(t)})^2} - a_{i, \ldots, i}^2
\end{split} \label{eq:eqn_unb_Est_var_diag_rad_proof_prod} \\
 & = \sum_{s=0}^{N-1} \Expt{\RVar^4}^s \left(  \sum_{\scriptsize
\begin{array}{c}
 (j_1, \ldots, j_{N-1})\\
\text{where $s$ of}~j_t, t \in [N-1]\\
\text{are equal to $i$}
\end{array}}    a_{j_1, \ldots, j_{N-1},i}^2   \right)  + 0 - a_{i, \ldots, i}^2.  \label{eq:eqn_var_yi_Rad_special_struct_new}
    \end{align}
In Equation \eqref{eq:eqn_var_yi_Rad_special_struct_new}, $\RVar$ denotes a random variable which is identical to $g_i^{(n)}$'s for $i \in [d]$ and $n \in [N-1]$. The Equation \eqref{eq:eqn_var_yi_Rad_special_struct_new} holds due to the following fact
    \begin{align}
        \E \left[ (g^{(t)}_{j_{t}})^2 (g_{i}^{(t)})^2\right] = \begin{cases}
            \Expt{\RVar^4}, & \text{if } j_t = i\\
            \Expt{\RVar^2}\Expt{\RVar^2} = 1, & \text{otherwise} 
        \end{cases} ~ \forall ~ t \in [N-1] \text{ and } j_t, i \in [d]\nonumber
    \end{align}
and hence the product $\prod_{t=1}^{N-1} \En{(g_{j_t}^{(t)})^2(g_{i}^{(t)})^2} = \Expt{\RVar^4}^{s}$, where $s$ is the number of times $j_t = i$, while $\En{ g_{j_n}^{(n)}g_{k_n}^{(n)}(g_{i}^{(n)})^2} = 0~\forall ~~ n \in [N-1] \text{ and } j_n, i \in [d]$.
Finally, we compute the covariance $\Cov\left(y_p,y_q\right)$ for $p, q \in[d], p \neq q$ by first computing $\Expt{y_py_q}$.
\begin{align}
\Expt{y_py_q} &   = \mathbb E\left[ \sum_{j_1,\hdots,j_{N-1}}a_{j_1,\hdots,j_{N-1},p}a_{j_1,\hdots,j_{N-1},q} \prod_{t=1}^{N-1}(g_p^{(t)})(g_q^{(t)}) (g_{j_t}^{(t)}) (g_{j_t}^{(t)})    \right. \notag \\
 & \phantom{=}+ \left. \sum_{(j_1,\hdots,j_{N-1}) \neq (k_1,\hdots,k_{N-1})} a_{j_1,\hdots,j_{N-1},p}a_{k_1,\hdots,k_{N-1},q} \prod_{t=1}^{N-1} g_{j_t}^{(t)} g_{k_t}^{(t)}(g_p^{(t)})(g_q^{(t)}) \right] \notag \\
  & =  \sum_{j_1,\hdots,j_{N-1}}a_{j_1,\hdots,j_{N-1},p}a_{j_1,\hdots,j_{N-1},q} \prod_{t=1}^{N-1}\Expt{(g_p^{(t)})(g_q^{(t)}) (g_{j_t}^{(t)}) (g_{j_t}^{(t)})}     \notag \\
 & + \sum_{(j_1,\hdots,j_{N-1}) \neq (k_1,\hdots,k_{N-1})} a_{j_1,\hdots,j_{N-1},p}a_{k_1,\hdots,k_{N-1},q} \prod_{t=1}^{N-1} \Expt{g_{j_t}^{(t)} g_{k_t}^{(t)}(g_p^{(t)})(g_q^{(t)})} \label{covariance_derivation_1}\\
 & = \sum_{(j_1,\hdots,j_{N-1}) \neq (k_1,\hdots,k_{N-1})} a_{j_1,\hdots,j_{N-1},p}a_{k_1,\hdots,k_{N-1},q} \prod_{t=1}^{N-1} \Expt{g_{j_t}^{(t)} g_{k_t}^{(t)}(g_p^{(t)})(g_q^{(t)})} \label{covariance_derivation_2} \\
 & =  \sum_{\scriptsize\begin{array}{c}
    (j_1,\hdots,j_{N-1}) \in \{p,q\} \\
    \text{and}~(k_1,\hdots,k_{N-1}) \in \{p,q\} \\
    \text{and}~j_t \neq k_t ~ \forall ~ t \in [N-1]
    \end{array}} a_{j_1,\hdots,j_{N-1},p}a_{k_1,\hdots,k_{N-1},q} \label{covariance_derivation_3}
\end{align}
In our derivation, Equation~\eqref{covariance_derivation_2} follows from Equation~\eqref{covariance_derivation_1} since
\begin{align}
\Expt{(g_p^{(t)})(g_q^{(t)}) (g_{j_t}^{(t)}) (g_{j_t}^{(t)})} & = \left\{ \begin{array}{r l}
\Expt{g_p^{(t)}}\Expt{g_q^{(t)}}\Expt{(g_{j_t}^{(t)})^2} = 0, & \text{if}~j_t \neq p\\
& ~~ \text{and } j_t \neq q \\
\Expt{g_p^{(t)}} \Expt{(g_j^{(t)})^3} = 0, & \text{if}~j_t = q \\
\Expt{g_q^{(t)}} \Expt{(g_j^{(t)})^3} = 0, & \text{if}~j_t = p
\end{array}\right.
\end{align}
for a fixed $t \in [N-1]$ hence the product of expectations in the first summation is zero. We next observe that
\begin{align}
\Expt{g_{j_t}^{(t)}g_{k_t}^{(t)}g_p^{(t)}g_q^{(t)}} = \left\{ \begin{array}{r l}
\Expt{(g_p^{(t)})^2(g_q^{(t)})^2} = 1, & \text{if}~j_t=p, k_t=  q, ~\text{or}~j_t=q,k_t = p\\
0, & \text{otherwise}
\end{array}\right.
\end{align}
for a fixed $t \in [N-1]$. For the product of expectations to be non-zero, we need to look at the summation of terms $a_{j_1,\hdots,j_{N-1},p}a_{k_1,\hdots,k_{N-1},q}$ simultaneously fulfilling the conditions $(j_1,\hdots,j_{N-1})\in \{p,q\}, (k_1,\hdots,k_{N-1})\in \{p,q\}$ and $j_t \neq k_t$ for all $t \in [N-1]$, therefore Equation~\eqref{covariance_derivation_3} follows from Equation~\eqref{covariance_derivation_2}. It thus follows that
\begin{align*}
\Cov\left(y_p,y_q\right) & = \Expt{y_py_q} - \Expt{y_p}\Expt{y_q} \\
 & = \sum_{\scriptsize\begin{array}{c}
    (j_1,\hdots,j_{N-1}) \in \{p,q\} \\
    \text{and}~(k_1,\hdots,k_{N-1}) \in \{p,q\} \\
    \text{and}~j_t \neq k_t ~\forall~t \in[N-1]
    \end{array}} a_{j_1,\hdots,j_{N-1},p}a_{k_1,\hdots,k_{N-1},q}  - a_{p,\hdots,p}a_{q,\hdots,q} \numberthis \label{eq:eqn_covar_yi_Rad_special_struct_new}.
\end{align*}
Equations~\eqref{eq:eqn_unb_Est_diag_rad_proof} \eqref{eq:eqn_var_yi_Rad_special_struct_new}, and \eqref{eq:eqn_covar_yi_Rad_special_struct_new} complete a proof the theorem.
\end{proof}

The following corollaries provide bounds on the variance of the diagonal estimator when elements of random vector $\g^{(n)}$ for $n \in [N-1]$ are \textit{i.i.d.}   Rademacher and Gaussian. They also state the bounds on the number of samples required to be $(\epsilon, \delta)$ estimator.

\begin{restatable}{cor}{diagRadCor}\label{rade_diag_cor}
If the entries of $\g^{(n)}$ for $n \in [N-1]$ in Theorem~\ref{thm:diag_rad_new} are  i.i.d. Rademacher, then 
\begin{align}
        \Varn{y_i} &= \sum_{j_1, \ldots, j_{N-1}} a_{j_1, \ldots, j_{N-1},i}^2   - a_{i, \ldots, i}^2, ~~~~~~~~\forall i \in [d]\numberthis \label{eq:eqn_0410_1}.
    \end{align}
Further, for any diagonal element $a_{i, \ldots, i}$ of $\TA$, the mean of its $K$ \textit{i.i.d.} estimates, where  $$K \geq O\left( \left(\sum_{j_1, \ldots, j_{N-1}}^d a_{j_1, \ldots, j_{N-1},i}^2   - a_{i, \ldots, i}^2 \right) \left(2 +\log(1/\delta) \right)^{2(N-1)}/\left(\epsilon^2 \cdot  a_{i, \ldots, i}^2\right) \right), $$  obtained using different sets of $\g^{(n)}$'s , for $n\in [N-1]$, gives  an $(\epsilon, \delta)$ approximation for $a_{i, \ldots, i}$. 
\end{restatable}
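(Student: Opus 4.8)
The plan is to derive the corollary in two pieces: first specialize the general variance formula of Theorem~\ref{thm:diag_rad_new} to the Rademacher case, then apply the Hypercontractivity Concentration Inequality (Theorem~\ref{thm:hpercontractractivity}) together with a standard averaging argument to get the sample bound. For the variance, the key observation is that for a Rademacher random variable $\RVar$ we have $\Expt{\RVar^4} = 1$, so every term $\Expt{\RVar^4}^s$ in Equation~\eqref{eq:310324_1} equals $1$. Consequently the nested sum over $s$ and over index tuples with exactly $s$ coordinates equal to $i$ collapses: the disjoint union over $s \in \{0,\ldots,N-1\}$ of the sets of tuples $(j_1,\ldots,j_{N-1})$ having exactly $s$ coordinates equal to $i$ is simply the set of all tuples in $[d]^{N-1}$. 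Hence $\sum_{s=0}^{N-1}\Expt{\RVar^4}^s\left(\sum a_{j_1,\ldots,j_{N-1},i}^2\right) = \sum_{j_1,\ldots,j_{N-1}} a_{j_1,\ldots,j_{N-1},i}^2$, which yields Equation~\eqref{eq:eqn_0410_1} after subtracting $a_{i,\ldots,i}^2$.

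For the sample-complexity part, I would take $K$ independent copies $y_i^{(1)},\ldots,y_i^{(K)}$ of the estimator $y_i$, each built from a fresh independent draw of the $\g^{(n)}$'s, and consider their empirical mean $\bar y_i = \frac{1}{K}\sum_{k=1}^K y_i^{(k)}$. Each $y_i^{(k)}$ is a polynomial of total degree $2(N-1)$ in the underlying Rademacher variables (the factor $\g$ contributes degree $N-1$ and the tensor-vector contractions contribute another degree $N-1$), so $\bar y_i$ is also a degree-$q$ polynomial with $q = 2(N-1)$ in the full collection of $K\cdot d\cdot(N-1)$ independent Rademacher variables. Its mean is $a_{i,\ldots,i}$ by unbiasedness, and its variance is $\Varn{y_i}/K$ with $\Varn{y_i}$ given by Equation~\eqref{eq:eqn_0410_1}. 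Applying Theorem~\ref{thm:hpercontractractivity} with $\lambda = \epsilon\,|a_{i,\ldots,i}|$ gives
\begin{align}
\Prn{|\bar y_i - a_{i,\ldots,i}| \geq \epsilon\,|a_{i,\ldots,i}|} \leq e^2 \exp\!\left(-\left(\frac{\epsilon^2 a_{i,\ldots,i}^2\, K}{R\,\Varn{y_i}}\right)^{1/(2(N-1))}\right).
\end{align}
Setting the right-hand side to be at most $\delta$ and solving for $K$: we need $\left(\epsilon^2 a_{i,\ldots,i}^2 K/(R\,\Varn{y_i})\right)^{1/(2(N-1))} \geq \log(e^2/\delta) = 2 + \log(1/\delta)$, i.e. $K \geq R\,\Varn{y_i}\,(2+\log(1/\delta))^{2(N-1)}/(\epsilon^2 a_{i,\ldots,i}^2)$. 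Substituting the explicit variance and absorbing $R$ into the big-$O$ gives exactly the stated bound $K \geq O\left(\left(\sum_{j_1,\ldots,j_{N-1}} a_{j_1,\ldots,j_{N-1},i}^2 - a_{i,\ldots,i}^2\right)(2+\log(1/\delta))^{2(N-1)}/(\epsilon^2 a_{i,\ldots,i}^2)\right)$.

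The main obstacle I anticipate is the bookkeeping around the degree of the polynomial and the legitimacy of applying Theorem~\ref{thm:hpercontractractivity} to the averaged estimator. One has to be careful that the $K$ copies use genuinely disjoint sets of Rademacher variables (so that $\bar y_i$ is a single polynomial in independent centered Rademacher inputs, as the hypothesis of the hypercontractivity inequality requires) and that the degree used in the exponent is the degree of $\bar y_i$ as a polynomial in those inputs, namely $2(N-1)$, not something larger. A secondary subtlety is that the bound is vacuous when $a_{i,\ldots,i} = 0$ (the relative-error guarantee in Definition~\ref{def:eps_delta_approx} is meaningless there), so the statement implicitly assumes $a_{i,\ldots,i} \neq 0$; I would note this. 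Everything else — computing $\Expt{\RVar^4}=1$, re-indexing the sum, and the algebra of solving the exponential inequality for $K$ — is routine.
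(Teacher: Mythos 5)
Your proposal is correct and follows essentially the same route as the paper's own proof: specialize Equation~\eqref{eq:310324_1} using $\Expt{\RVar^4}=1$ so the sum over $s$ collapses to the full sum over $[d]^{N-1}$, then average $K$ independent copies, observe the mean is a degree-$2(N-1)$ polynomial in independent Rademacher variables with variance $\Varn{y_i}/K$, and apply Theorem~\ref{thm:hpercontractractivity} with $\lambda=\epsilon\,|a_{i,\ldots,i}|$ to solve for $K$. Your added remarks on the disjointness of the Rademacher inputs across copies and the implicit assumption $a_{i,\ldots,i}\neq 0$ are sound observations that the paper leaves tacit, but they do not change the argument.
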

\begin{proof}
From Equation~\eqref{eq:310324_1} of Theorem~\ref{thm:diag_rad_new}, we have
\begin{align}
    \Varn{y_i} &=\sum_{s=0}^{N-1} \Expt{\RVar^4}^s \left(  \sum_{\scriptsize
\begin{array}{c}
 (j_1, \ldots, j_{N-1})\\
\text{where $s$ of}~j_t, t \in [N-1]\\
\text{are equal to $i$}
\end{array}}    a_{j_1, \ldots, j_{N-1},i}^2   \right)  - a_{i, \ldots, i}^2,
\end{align}
where $\RVar$ is a random variable with a distribution identical to the entries of $\g^{(n)}$. The fourth moment of Rademacher distribution is $1$, which implies $\Expt{\RVar^4}=1$. Thus, from the above equation, we have
\begin{align*}
  \Varn{y_i} &=\sum_{s=0}^{N-1}  \left(  \sum_{\scriptsize
\begin{array}{c}
 (j_1, \ldots, j_{N-1})\\
\text{where $s$ of}~j_t, t \in [N-1]\\
\text{are equal to $i$}
\end{array}}   a_{j_1, \ldots, j_{N-1},i}^2   \right)  - a_{i, \ldots, i}^2\\
& = \sum_{j_1, \ldots, j_{N-1}} a_{j_1, \ldots, j_{N-1},i}^2   - a_{i, \ldots, i}^2.
\end{align*}
     Let $Y := \frac{1}{K} \sum_{k=1}^{K} y_i^{(k)}$ where $y_{i}^{(k)}$ for $k \in [K]$ is the estimate of $a_{i, \ldots,i}$ obtained using the $k$-th set of $\g^{(n)}$'s for $n \in [N-1]$. Then
\begin{align*}
   \Varn{Y} & = \Varn{\frac{1}{K} \sum_{k=1}^{K} y_{i}^{(k)}}\\
   &= \frac{1}{K^2} \sum_{k=1}^K \Varn{y_{i}^{(k)}} \quad \left[\because y_{i}^{(k)} ~\text{ for $k \in [K]$ are \textit{i.i.d.} estimates}  \right]\\
    & = \frac{1}{K^2} \sum_{k=1}^K \left(\sum_{j_1, \ldots, j_{N-1}} a_{j_1, \ldots, j_{N-1},i}^2   - a_{i, \ldots, i}^2\right)\\
    &= \frac{\sum_{j_1, \ldots, j_{N-1}} a_{j_1, \ldots, j_{N-1},i}^2   - a_{i, \ldots, i}^2}{K}.
\end{align*}
The variance of $Y$ is bounded, and $Y$ is a polynomial of degree $2(N-1)$ of independent Rademacher random variables (the entries corresponding to distinct sets of $\g^{(n)}$'s for $n \in [N-1]$). Then, for some absolute constant $R$ by utilizing the  Hypercontractivity Concentration Inequality (extension of the Hanson-Wright inequality) stated in Thoerem~\ref{thm:hpercontractractivity}, we have
\begin{align*}
    \Prn{|Y - \E[Y] | \geq \epsilon \cdot a_{i, \ldots, i}} &\leq e^2 \cdot e^{- \left( \frac{\epsilon^2 \cdot a_{i,\ldots,i}^2 \cdot K}{R \cdot  \left(\sum_{j_1, \ldots, j_{N-1}}^d a_{j_1, \ldots, j_{N-1},i}^2- a_{i,\ldots,i}^2 \right)}\right)^{\frac{1}{2(N-1)}}} \numberthis \label{eq:eq280324}\\
    &\leq \delta  \numberthis \label{eq:eqn0410_2}
\end{align*}
    $\text{if we choose } K \geq  \frac{ 2 \cdot R \cdot \left(\sum_{j_1, \ldots, j_{N-1}} a_{j_1, \ldots, j_{N-1},i}^2   - a_{i, \ldots, i}^2 \right) \left(2 + \log(1/\delta) \right)^{2(N-1)}}{\epsilon^2 \cdot a_{i, \ldots,i}^2}$ in Equation~\eqref{eq:eq280324}.
\end{proof}

\begin{restatable}{cor}{diagNorCor}\label{Normal_diag_cor}
If the entries of $\g^{(n)}$ for $n \in [N-1]$  in Theorem~\ref{thm:diag_rad_new} are \textit{i.i.d.} $\mathcal N(0,1)$, then  
\begin{align}
        \Varn{y_i} &=  \sum_{s=0}^{N-1} 3^s \left(  \sum_{\scriptsize
\begin{array}{c}
 (j_1, \ldots, j_{N-1})\\
\text{where $s$ of}~j_t, t \in [N-1]\\
\text{are equal to $i$}
\end{array}}    a_{j_1, \ldots, j_{N-1},i}^2   \right)   - a_{i, \ldots, i}^2  ~~~~~~~~\forall i \in [d]  \label{eq:eqn_var_yi_special_struct_new_normal}\\
 & \leq \left(3^{N-1} -1 \right) \,a_{i, \ldots, i}^2 + 3^{N-2} \sum_{(j_1, \ldots, j_{N-1}) \in [d]^{N-1} \setminus \{(i, \ldots, i)\}} a_{j_1, \ldots, j_{N-1},i}^2. \numberthis \label{eq:eqn_var_yi_Rad}
    \end{align}
Further, for any diagonal element $a_{i, \ldots, i}$ of $\TA$, the average of  its $K$ \textit{i.i.d.} estimates    obtained using different sets of $\g^{(n)}$'s for $n \in [N-1]$, gives an $(\epsilon, \delta)$ approximation of $a_{i, \ldots,i}$ for 

\resizebox{0.97\textwidth}{!}{$K \geq O\left( \hspace{-0.2cm}\left( \left(3^{N-1} -1 \right) \, a_{i, \ldots, i}^2 \, + \, 3^{N-2} \hspace{-0.6cm} \displaystyle \sum_{(j_1, \ldots, j_{N-1}) \in [d]^{N-1} \setminus \{(i, \ldots, i)\}} \hspace{-0.8cm}a_{j_1, \ldots, j_{N-1},i}^2\right) \left( 2 + \log(1/\delta)\right)^{2(N-1)}/\left(\epsilon^2 \cdot a_{i, \ldots, i}^2\right) \right)$}. 

\end{restatable}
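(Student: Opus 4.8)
The plan is to follow the same three-step template used in the proof of Corollary~\ref{rade_diag_cor}, now specialised to the Gaussian case. First I would substitute the fourth moment of a standard normal variable, namely $\Expt{\RVar^4} = 3$, into the variance formula~\eqref{eq:310324_1} of Theorem~\ref{thm:diag_rad_new}. Since $\g^{(n)}$ has i.i.d.\ $\mathcal{N}(0,1)$ entries, $\RVar \sim \mathcal{N}(0,1)$ and its kurtosis is $3$, so the exact expression~\eqref{eq:eqn_var_yi_special_struct_new_normal} follows immediately.

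Second, to obtain the upper bound~\eqref{eq:eqn_var_yi_Rad}, I would isolate the $s = N-1$ term of the sum in~\eqref{eq:eqn_var_yi_special_struct_new_normal}: the only tuple $(j_1,\ldots,j_{N-1})$ with all $N-1$ coordinates equal to $i$ is $(i,\ldots,i)$, contributing $3^{N-1} a_{i,\ldots,i}^2$. For every $s \in \{0,\ldots,N-2\}$ the coefficient satisfies $3^s \le 3^{N-2}$, and as $s$ ranges over $\{0,\ldots,N-2\}$ the inner index sets (tuples with exactly $s$ coordinates equal to $i$) form a disjoint partition of $[d]^{N-1} \setminus \{(i,\ldots,i)\}$. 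Hence $\sum_{s=0}^{N-2} 3^s(\cdots) \le 3^{N-2} \sum_{(j_1,\ldots,j_{N-1}) \ne (i,\ldots,i)} a_{j_1,\ldots,j_{N-1},i}^2$, and combining with the $-a_{i,\ldots,i}^2$ term yields $\Varn{y_i} \le (3^{N-1}-1) a_{i,\ldots,i}^2 + 3^{N-2} \sum_{(j_1,\ldots,j_{N-1}) \ne (i,\ldots,i)} a_{j_1,\ldots,j_{N-1},i}^2$.

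Third, for the sample-size bound I would set $Y := \tfrac1K \sum_{k=1}^K y_i^{(k)}$, where $y_i^{(k)}$ is the estimate from the $k$-th independent set of $\g^{(n)}$'s, so that $\Expt{Y} = a_{i,\ldots,i}$ and, by independence of the $K$ estimates, $\Varn{Y} = \Varn{y_i}/K$. Next I would observe that $Y$ is a degree-$2(N-1)$ polynomial in the $K(N-1)d$ independent standard Gaussian entries of the vectors $\g^{(n)}$: each summand of $y_i$ is a product of the $N-1$ factors $g_i^{(t)}$ coming from $\g$ and the $N-1$ factors $g_{j_t}^{(t)}$ coming from the tensor--vector contraction. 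Applying the Hypercontractivity Concentration Inequality (Theorem~\ref{thm:hpercontractractivity}, which is stated for Gaussian as well as Rademacher variables) with $\lambda = \epsilon\, a_{i,\ldots,i}$, $q = 2(N-1)$, and $\Varn{Y} = \Varn{y_i}/K$ gives $\Prn{|Y - \Expt{Y}| \ge \epsilon\, a_{i,\ldots,i}} \le e^2 \exp\!\bigl(-(\epsilon^2 a_{i,\ldots,i}^2 K / (R\,\Varn{y_i}))^{1/(2(N-1))}\bigr)$; forcing the exponent to be at least $\log(e^2/\delta) = 2 + \log(1/\delta)$ and substituting the bound from step two for $\Varn{y_i}$ produces the stated lower bound on $K$.

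The argument is essentially a verbatim specialisation of Corollary~\ref{rade_diag_cor}, so I do not anticipate a serious obstacle; the only genuinely new bookkeeping is the disjoint-partition counting in step two, and the two points one must be careful about are the correct value $\Expt{\RVar^4}=3$ for $\mathcal{N}(0,1)$ (as opposed to $1$ for Rademacher) and the fact that Theorem~\ref{thm:hpercontractractivity} indeed covers polynomials of independent Gaussians, which it does.
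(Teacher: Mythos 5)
Your proposal is correct and follows essentially the same route as the paper: substitute $\Expt{\RVar^4}=3$ into Equation~\eqref{eq:310324_1}, isolate the $s=N-1$ term and bound the remaining coefficients by $3^{N-2}$ over the partition of $[d]^{N-1}\setminus\{(i,\ldots,i)\}$, then apply Theorem~\ref{thm:hpercontractractivity} to the degree-$2(N-1)$ polynomial average exactly as in Corollary~\ref{rade_diag_cor}. No gaps to report.
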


\begin{proof}
From Equation~\eqref{eq:310324_1} of Theorem~\ref{thm:diag_rad_new}, we have
\begin{align*}
    \Varn{y_i} &=\sum_{s=0}^{N-1} \Expt{\RVar^4}^s \left(  \sum_{\scriptsize
\begin{array}{c}
 (j_1, \ldots, j_{N-1})\\
\text{where $s$ of}~j_t, t \in [N-1]\\
\text{are equal to $i$}
\end{array}}    a_{j_1, \ldots, j_{N-1},i}^2   \right)  - a_{i, \ldots, i}^2,
\end{align*}
where $\RVar$ is a random variable with a distribution identical to the entries of $\g^{(n)}$, i.e., $\RVar$ follows a standard normal distribution. The fourth moment of $\mathcal{N}(0,1)$ is $3$, which implies $\Expt{\RVar^4}=3$. Thus, by using this fact in the above equation, we have
\begin{align*}
     \Varn{y_i} &=\sum_{s=0}^{N-1}  3^s \left(  \sum_{\scriptsize
\begin{array}{c}
 (j_1, \ldots, j_{N-1})\\
\text{where $s$ of}~j_t, t \in [N-1]\\
\text{are equal to $i$}
\end{array}}    a_{j_1, \ldots, j_{N-1},i}^2   \right)  - a_{i, \ldots, i}^2\\
& = 3^{N-1} a_{i,\ldots,i}^2 + \sum_{s=0}^{N-2}  3^s \left(  \sum_{\scriptsize
\begin{array}{c}
 (j_1, \ldots, j_{N-1})\\
\text{where $s$ of}~j_t, t \in [N-1]\\
\text{are equal to $i$}
\end{array}}    a_{j_1, \ldots, j_{N-1},i}^2   \right)  - a_{i, \ldots, i}^2\\
& \leq \left(3^{N-1} -1 \right) \,a_{i, \ldots, i}^2 + 3^{N-2} \sum_{(j_1, \ldots, j_{N-1}) \in [d]^{N-1} \setminus \{(i, \ldots, i)\}} a_{j_1, \ldots, j_{N-1},i}^2. \numberthis
\end{align*}
We can easily prove the concentration bound by utilizing the  Hypercontractivity Concentration Inequality stated in Theorem~\ref{thm:hpercontractractivity} and employing
the same steps as used in the proof of Corollary~\ref{rade_diag_cor}.
\end{proof}

\noindent \textbf{Note:} Equation \eqref{eq:eqn_var_yi_Rad} provides a crude upper bound for the variance of $y_i$, while Equation \eqref{eq:eqn_var_yi_special_struct_new_normal} is exact. 
In the summation $\displaystyle \sum_{\scriptsize
\begin{array}{c}
 (j_1, \ldots, j_{N-1})\\
\text{where $s$ of}~j_t, t \in [N-1]\\
\text{are equal to $i$}
\end{array}}    a_{j_1, \ldots, j_{N-1},i}^2 $, for each $s$, there are $\left(\begin{array}{c}
N-1 \\
s
\end{array}\right)$ choices $s$ of  $(j_1,\ldots,j_{N-1})$ to equal $i$, and for a fixed configuration of $(j_1,\ldots,j_{N-1})$, there are $(d-1)^{N-1-s}$ possible terms $a_{j_1, \ldots, j_{N-1},i}^2$. 
Given that $d \gg N $ in practical applications, we can expect that the majority of terms in the summation over $ (j_1, \ldots, j_{N-1}) $ will satisfy $ j_t \ne i$ for all $ t \in [N-1]$. In other words, the proportion of terms where none of the indices equal $ i $ approaches $1$ as $d$ increases.
Moreover, if the tensor $\mathcal{A}$ has a special structure, or if most off-diagonal elements in the tensor $\TA$ are approximately equal to each other, say $a_{j_1, \ldots, j_{N-1},i} \approx \tilde{a}$, then we can use Equation \eqref{eq:eqn_var_yi_special_struct_new_normal} to get an approximation for the variance by estimating
\begin{align}
    \Varn{y_i} & \approx \left(3^{N-1} -1 \right) \,a_{i, \ldots, i}^2 + \sum_{s=0}^{N-2} 3^s \left(\begin{array}{c}
N-1 \\
s
\end{array}\right)(d-1)^{N-1-s} \tilde{a}.  
\end{align}

The sample bound in the above corollaries has the exponential dependence on the tensor order $N$ in the term $\log(1/\delta)$. 
If the $(\epsilon, \delta)$ estimator is not required to be linear, we can eliminate it using the \textit{median-of-means} (Lemma~\ref{lem:median_of_means}) trick. The following corollaries provide bounds with improved dependence on $\delta$ by exploiting the results stated in Lemma~\ref{lem:median_of_means}.

\begin{cor}
    Suppose $y_i^{(1)}, \ldots, y_i^{(K)}$ are the \text{i.i.d.} estimates of $a_{i, \ldots, i}$ obtained using $K$ different set of $\g^{(n)}$ for $n \in [N-1]$ in Theorem 5, where entries of $\g^{(n)}$ are \textit{i.i.d} Rademacher. Divide the $K$ estimates randomly into $r$ disjoint groups. The median-of-means of these $r$ groups gives an $(\epsilon,\delta)$ approximation  for $a_{i,\ldots,i}$ for $K \geq \frac{32(\sum_{j_1, \ldots j_{N-1},i} a_{j_1, \ldots,j_{N-1},i}^2 - a_{i,\dots,i}^2) \log(1/\delta)}{\epsilon^2 a_{i, \ldots,i}^2}$ and $r  = 8\log(1/\delta)$.
\end{cor}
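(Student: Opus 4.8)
The plan is to combine the variance formula for the Rademacher diagonal estimator (Corollary~\ref{rade_diag_cor}, Equation~\eqref{eq:eqn_0410_1}) with the median-of-means guarantee of Lemma~\ref{lem:median_of_means}. First I would set $Y_1, \ldots, Y_{rs}$ to be the $K = rs$ i.i.d. estimates $y_i^{(1)}, \ldots, y_i^{(K)}$, each having mean $\mu = a_{i, \ldots, i}$ (by the unbiasedness part of Theorem~\ref{thm:diag_rad_new}) and variance $\sigma^2 = \sum_{j_1, \ldots, j_{N-1}} a_{j_1, \ldots, j_{N-1},i}^2 - a_{i, \ldots, i}^2$ (by Corollary~\ref{rade_diag_cor}). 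Note that since each $y_i^{(k)}$ depends only on its own independent set of Rademacher vectors $\g^{(n)}$, $n \in [N-1]$, the $Y_k$ are genuinely independent, so Lemma~\ref{lem:median_of_means} applies directly.

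Next I would invoke Lemma~\ref{lem:median_of_means} with $r = 8\log(1/\delta)$: dividing the $K$ samples into $r$ disjoint groups of size $s = K/r$ and forming $\mu_{MM}$ as the median of the $r$ group means, the lemma gives, with probability at least $1 - \delta$,
\begin{align*}
|\mu_{MM} - a_{i, \ldots, i}| \leq \sigma \sqrt{\frac{4}{s}} = 2\sqrt{\frac{\sigma^2}{s}}.
\end{align*}
To make this at most $\epsilon \cdot a_{i, \ldots, i}$ — i.e., an $(\epsilon, \delta)$-approximation in the sense of Definition~\ref{def:eps_delta_approx} — it suffices to require $4\sigma^2/s \leq \epsilon^2 a_{i,\ldots,i}^2$, that is $s \geq 4\sigma^2/(\epsilon^2 a_{i,\ldots,i}^2)$. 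Multiplying by $r = 8\log(1/\delta)$ gives $K = rs \geq \frac{32\,\sigma^2 \log(1/\delta)}{\epsilon^2 a_{i,\ldots,i}^2}$, which is exactly the claimed bound after substituting the value of $\sigma^2$.

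This argument is essentially a bookkeeping exercise; there is no real obstacle, only care in the constants. The one point deserving a sentence of justification is that squaring the relative-error target $\epsilon \cdot a_{i,\ldots,i}$ introduces the factor $4$ which, cancelling against the $\sqrt{4/s}$ in the lemma, accounts for the coefficient $32 = 4 \cdot 8$ in the final sample bound; one should also note the estimator $\mu_{MM}$ is not linear in the underlying Rademacher entries, which is why the Hypercontractivity route of Corollary~\ref{rade_diag_cor} is replaced here — this is precisely the trade-off flagged in the text preceding the statement. Finally I would remark that the same computation, with $\sigma^2$ replaced by the Gaussian variance from Corollary~\ref{Normal_diag_cor}, yields the analogous bound in the normal case (presumably the content of the next corollary).
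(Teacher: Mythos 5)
Your proof is correct and follows exactly the route the paper intends: the corollary is stated as a direct consequence of the variance formula in Corollary~\ref{rade_diag_cor} and the median-of-means guarantee of Lemma~\ref{lem:median_of_means}, with the constant $32 = 4\cdot 8$ arising just as you describe. Nothing further is needed.
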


\begin{cor}
    Suppose $y_i^{(1)}, \ldots, y_i^{(K)}$ are the \text{i.i.d.} estimates of $a_{i, \ldots, i}$ obtained using $K$ different set of $\g^{(n)}$ for $n \in [N-1]$ in Theorem 5, where entries of $\g^{(n)}$ are \textit{i.i.d} $\mathcal{N}(0,1)$. Randomly divide the $K$ estimates into $r$ disjoint groups. The median-of-means of these $r$ groups yields an $(\epsilon,\delta)$ approximation  of $a_{i,\ldots,i}$ for $$K \geq \frac{32\left( \left(3^{N-1} -1 \right) \, a_{i, \ldots, i}^2 \, + \, 3^{N-2} \displaystyle \sum_{(j_1, \ldots, j_{N-1}) \in [d]^{N-1} \setminus \{(i, \ldots, i)\}} a_{j_1, \ldots, j_{N-1},i}^2\right) \log(1/\delta)}{\epsilon^2 a_{i, \ldots,i}^2}$$ and $r  = 8\log(1/\delta)$.
\end{cor}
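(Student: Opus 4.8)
The plan is to invoke Lemma~\ref{lem:median_of_means} (the median-of-means bound) with the per-group sample size chosen so that the guaranteed error $\sigma\sqrt{4/s}$ is at most $\epsilon \cdot a_{i,\ldots,i}$, where $\sigma^2$ is the variance of a single estimate $y_i$ in the Gaussian case. First I would recall from Corollary~\ref{Normal_diag_cor} (specifically Equation~\eqref{eq:eqn_var_yi_Rad}) that when the entries of $\g^{(n)}$ are \textit{i.i.d.}\ $\mathcal N(0,1)$, we have the upper bound
\begin{align*}
\Varn{y_i} \leq \sigma^2 := \left(3^{N-1}-1\right) a_{i,\ldots,i}^2 + 3^{N-2} \hspace{-0.4cm}\sum_{(j_1,\ldots,j_{N-1}) \in [d]^{N-1}\setminus\{(i,\ldots,i)\}}\hspace{-0.6cm} a_{j_1,\ldots,j_{N-1},i}^2.
\end{align*}
The estimates $y_i^{(1)},\ldots,y_i^{(K)}$ are \textit{i.i.d.}\ with mean $a_{i,\ldots,i}$ (unbiasedness from Theorem~\ref{thm:diag_rad_new}) and variance at most $\sigma^2$, so they satisfy the hypotheses of Lemma~\ref{lem:median_of_means}.

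Next I would set $r = 8\log(1/\delta)$ and $s = K/r$, so that $K = rs$, and apply Lemma~\ref{lem:median_of_means}: with probability at least $1-\delta$,
\begin{align*}
\left|\mu_{MM} - a_{i,\ldots,i}\right| \leq \sigma\sqrt{\tfrac{4}{s}}.
\end{align*}
To turn this into an $(\epsilon,\delta)$-approximation we need $\sigma\sqrt{4/s} \leq \epsilon \cdot a_{i,\ldots,i}$, i.e.\ $s \geq 4\sigma^2/(\epsilon^2 a_{i,\ldots,i}^2)$. Combining with $r = 8\log(1/\delta)$ gives $K = rs \geq 32\,\sigma^2 \log(1/\delta)/(\epsilon^2 a_{i,\ldots,i}^2)$, which upon substituting the expression for $\sigma^2$ is precisely the claimed bound. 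Since $\Varn{y_i} \leq \sigma^2$, using $\sigma^2$ in place of the true variance in the lemma only makes the error guarantee more conservative, so the bound remains valid. This mirrors verbatim the companion Rademacher corollary, only with the Gaussian variance substituted.

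There is essentially no obstacle here: the statement is a direct plug-in of the known variance bound into the black-box median-of-means lemma, and the only thing to be careful about is the bookkeeping that $s \geq 4\sigma^2/(\epsilon^2 a_{i,\ldots,i}^2)$ together with $r = 8\log(1/\delta)$ yields the stated $K$, and that using an upper bound on the variance is legitimate in Lemma~\ref{lem:median_of_means} (which it is, since enlarging $\sigma$ only weakens the conclusion). One minor point worth a sentence in the write-up is that $K$ must be divisible by $r$ (or one takes $s = \lfloor K/r\rfloor$), an inconsequential rounding issue absorbed into the $O(\cdot)$-free explicit constant by the usual convention.
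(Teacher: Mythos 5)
Your proposal is correct and is exactly the intended argument: the paper states this corollary without a written proof, as a direct plug-in of the Gaussian variance upper bound from Corollary~\ref{Normal_diag_cor} (Equation~\eqref{eq:eqn_var_yi_Rad}) into the median-of-means Lemma~\ref{lem:median_of_means}, with $r = 8\log(1/\delta)$ and $s \geq 4\sigma^2/(\epsilon^2 a_{i,\ldots,i}^2)$ giving the stated $K$. Your handling of the bookkeeping, including the legitimacy of using an upper bound on the variance, matches what the paper intends.
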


\subsection{Trace estimator}
\label{section_trace_estimator}
The following theorem gives an unbiased estimator for the trace of a tensor and provides a bound on its variance. Its proof follows from the results of  Theorem ~\ref{thm:diag_rad_new}.

\begin{restatable}{theorem}{traceEstimatorNew}~\label{thm:trace_estimator_new}
   Let $\TA \in \R^{d \times \cdots \times d}$ be \anordertensor~with each \modeororder~size $d$.     
    Let $\g^{(n)} \in \R^d$ for $n \in [N-1]$ be  random vectors where entries are mean zero, have a unit second moment and finite fourth moment, and are pairwise independent, i.e. $\Expt{g^{(n)}_i} = 0, \Expt{\left(g^{(n)}_i\right)^2} = 1, \Expt{\left(g^{(n)}_i\right)^4} < \infty$, $\Expt{g^{(n)}_i g^{(m)}_j} = \Expt{g^{(n)}_i}\Expt{g^{(m)}_j} \forall~m \neq n$ or $i \neq j$. Let $\g := \g^{(1)} * \g^{(2)} * \cdots * \g^{(N-1)}$ and 
    \begin{align}
       \y := \g * \left(\TA \vtimes_1 \g^{(1)} \vtimes_2 \g^{(2)} \vtimes_3 \cdots \vtimes_{N-1} \g^{(N-1)} \right)
\end{align}
Then 
\begin{align}
X & := \g^T \left(\TA \vtimes_1 \g^{(1)} \vtimes_2 \g^{(2)} \vtimes_3 \cdots \vtimes_{N-1} \g^{(N-1)} \right)=\vec{1}^T\y =  \sum_{p=1}^d y_p \label{eq:eqn_est_tr_rad}
    \end{align}
    gives an unbiased estimate of the trace of tensor $\TA$, i.e. $\En{X}  = \trn{\TA}$    with variance
    \begin{align}
        \Varn{X} 
        & = \sum_{p=1}^d\left( \sum_{s=0}^{N-1} \Expt{\RVar^4}^s \left(  \sum_{\scriptsize
\begin{array}{c}
 (j_1, \ldots, j_{N-1})\\
\text{where $s$ of}~j_t, t \in [N-1]\\
\text{are equal to $i$}
\end{array}}    a_{j_1, \ldots, j_{N-1},p}^2   \right)  - a_{p, \ldots, p}^2 \right) \notag\\
 & + 2 \sum_{p>q}^d\left( \sum_{\scriptsize\begin{array}{c}
    (j_1,\hdots,j_{N-1}) \in \{p,q\} \\
    \text{and}~(k_1,\hdots,k_{N-1}) \in \{p,q\} \\
    \text{and}~j_t \neq k_t ~\forall~ t\in [N-1]
    \end{array}} a_{j_1,\hdots,j_{N-1},p}a_{k_1,\hdots,k_{N-1},q} - a_{p,\hdots,p}a_{q,\hdots,q}.     \right).
        \label{eq:eqn_theor_var_rad}
    \end{align}
where $\RVar$ is a random variable that has a distribution identical to the entries of $\g^{(n)}$.
\end{restatable}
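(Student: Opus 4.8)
The plan is to derive the trace estimator $X$ and its variance directly from Theorem~\ref{thm:diag_rad_new}, exploiting the identity $X = \sum_{p=1}^d y_p$ where $\y$ is exactly the diagonal estimator vector analyzed there. First I would verify the representation $X = \g^T(\TA \vtimes_1 \g^{(1)} \vtimes_2 \cdots \vtimes_{N-1} \g^{(N-1)}) = \vec{1}^T \y = \sum_{p=1}^d y_p$: the mode-$n$ multiplications collapse the first $N-1$ modes to produce a $d$-vector $\v$ with $v_p = \sum_{j_1,\ldots,j_{N-1}} a_{j_1,\ldots,j_{N-1},p}\prod_{t=1}^{N-1} g^{(t)}_{j_t}$, and then $\g^T \v = \sum_p g_p v_p = \sum_p y_p$ since $y_p = g_p v_p$. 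Unbiasedness is then immediate by linearity of expectation: $\En{X} = \sum_{p=1}^d \En{y_p} = \sum_{p=1}^d a_{p,\ldots,p} = \trn{\TA}$, using $\En{y_p} = a_{p,\ldots,p}$ from Theorem~\ref{thm:diag_rad_new}.

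For the variance, I would expand $\Varn{X} = \Varn{\sum_{p=1}^d y_p} = \sum_{p=1}^d \Varn{y_p} + 2\sum_{p>q}^d \Covn{y_p}{y_q}$. Substituting the expression \eqref{eq:310324_1} for $\Varn{y_p}$ and the expression \eqref{eq:310324_2} for $\Covn{y_p}{y_q}$ from Theorem~\ref{thm:diag_rad_new} gives exactly the stated formula~\eqref{eq:eqn_theor_var_rad}. So the proof is essentially a one-line assembly once the decomposition $X = \sum_p y_p$ is established; no new probabilistic computation is needed.

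The only mild subtlety — and the closest thing to an obstacle — is making sure the variance decomposition is applied correctly: $y_1,\ldots,y_d$ are \emph{not} independent (they share the same random vectors $\g^{(1)},\ldots,\g^{(N-1)}$), so the cross terms genuinely contribute and one must carry the full covariance sum rather than dropping it. I would emphasize that the $\Cov$ terms were computed precisely for this purpose in Theorem~\ref{thm:diag_rad_new}, so invoking them here is legitimate. A secondary point to state cleanly is that the pairwise-independence / moment hypotheses on the $\g^{(n)}$ are inherited verbatim from Theorem~\ref{thm:diag_rad_new}, so all the expectation evaluations there (in particular $\En{(g^{(t)}_{j_t})^2(g^{(t)}_i)^2} \in \{\En{z^4},1\}$ and the vanishing of odd-moment products) remain valid. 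With these observations the theorem follows, and I would close by noting the $N=2$ specialization recovers Hutchinson's estimator and its variance $2(\|\A\|_F^2 - \sum_j a_{j,j}^2)$ as a consistency check.
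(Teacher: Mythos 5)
Your proposal is correct and mirrors the paper's own proof exactly: both establish $X=\sum_{p=1}^d y_p$, get unbiasedness by linearity from $\En{y_p}=a_{p,\ldots,p}$, and obtain the variance from $\Varn{X}=\sum_p \Varn{y_p}+2\sum_{p>q}\Covn{y_p}{y_q}$ with the expressions of Theorem~\ref{thm:diag_rad_new} substituted in. Your remark that the covariance terms cannot be dropped is precisely why the paper computed them in Theorem~\ref{thm:diag_rad_new}, so nothing further is needed.
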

\begin{proof}
    From Equation~\eqref{eq:eqn_est_tr_rad}, we have
    \begin{align}
        X & := \g^T \left(\TA \vtimes_1 \g^{(1)} \vtimes_2 \g^{(2)} \vtimes_3 \cdots \vtimes_{N-1} \g^{(N-1)} \right)=  \sum_{p=1}^d y_p. \notag
    \end{align}
    The expected value of $X$ is
    \begin{align}
        \E[X] & = \E\left[ \sum_{p=1}^d y_p\right] = \sum_{p=1}^d \E[y_p]  = \sum_{p=1}^d a_{p, \ldots, p} = \trn{A}. \numberthis \label{eq:eqn_tr_A}
    \end{align}
    We compute the variance of $X$ as follows
    \begin{align}
        \Varn{X} &= \Var\left(\sum_{p=1}^d y_p\right) \notag\\
        &= \sum_{p=1}^d \Varn{y_p} + 2\sum_{p > q}^d\Cov\left(y_p,y_q \right) \label{eq:100924_1} \\
        & = \sum_{p=1}^d\left( \sum_{s=0}^{N-1} \Expt{\RVar^4}^s \left(  \sum_{\scriptsize
\begin{array}{c}
 (j_1, \ldots, j_{N-1})\\
\text{where $s$ of}~j_t, t \in [N-1]\\
\text{are equal to $i$}
\end{array}}    a_{j_1, \ldots, j_{N-1},p}^2   \right)  - a_{p, \ldots, p}^2 \right) \notag \\
 & + 2 \sum_{p>q}^d\left( \sum_{\scriptsize\begin{array}{c}
    (j_1,\hdots,j_{N-1}) \in \{p,q\} \\
    \text{and}~(k_1,\hdots,k_{N-1}) \in \{p,q\} \\
    \text{and}~j_t \neq k_t~\forall~ t \in [N-1]
    \end{array}} a_{j_1,\hdots,j_{N-1},p}a_{k_1,\hdots,k_{N-1},q} - a_{p,\hdots,p}a_{q,\hdots,q}    \right).
        \label{eq:eqn_theor_var_rad_appendix}
    \end{align}
    Equation~\eqref{eq:eqn_theor_var_rad_appendix} holds due to Equations~\eqref{eq:310324_1}, \eqref{eq:310324_2} and \eqref{eq:100924_1}. Equations~\eqref{eq:eqn_tr_A} and \eqref{eq:eqn_theor_var_rad_appendix} completes a proof of the theorem.
\end{proof}

The following corollaries provide bounds on the variance of the trace estimator when the elements of the random vector $\g^{(n)}$, for $n \in [N-1]$, are from \textit{i.i.d.}   Rademacher and Gaussian distributions. They also give the bounds on the number of samples required to achieve $(\epsilon, \delta)$ estimator.  

\begin{restatable}{cor}{trRadCor} \label{cor:trace_rademacher_new}
If   the entries of $\g^{(n)}$, for $n \in [N-1]$,  in Theorem~\ref{thm:trace_estimator_new} are \textit{i.i.d.} Rademacher, then
\begin{align}
        \Varn{X} 
        & = \|\TA\|_F^2 - \tr\left({\TA}\right)^2  +2 \sum_{p>q}^d\left( \sum_{\scriptsize\begin{array}{c}
    (j_1,\hdots,j_{N-1}) \in \{p,q\} \\
    \text{and}~(k_1,\hdots,k_{N-1}) \in \{p,q\} \\
    \text{and}~j_t \neq k_t ~\forall~ t \in [N-1]
    \end{array}} a_{j_1,\hdots,j_{N-1},p}a_{k_1,\hdots,k_{N-1},q}    \right) \label{eq:eqn_symptotic_tr_rad}\\
    & \leq 2\left( \|\TA\|_F^2 - \sum_{j=1}^d a_{j,\hdots,j}^2\right). \label{eq:eqn_tr_rad_var_proof}
    \end{align}
    Further,  the mean of the  $K$ \textit{i.i.d.} estimates obtained using distinct sets of $\g^{(n)}$'s  gives an $(\epsilon, \delta)$-approximation of $tr(\TA)$ for
    
   $$K \geq O\left(\frac{ \left( \|\TA\|_F^2 - \sum_{j=1}^d a_{j,\ldots,j}^2 \right) \left( 2+\log(1/\delta)\right)^{2(N-1)}}{\epsilon^2 \cdot \tr(\TA)^2}\right).$$ 
\end{restatable}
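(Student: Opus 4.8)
The plan is to specialise the general variance identity of Theorem~\ref{thm:trace_estimator_new} to the Rademacher case and then bound the result. Since the fourth moment of a Rademacher variable equals $1$, we have $\Expt{\RVar^4}=1$, so in Equation~\eqref{eq:eqn_theor_var_rad} every factor $\Expt{\RVar^4}^s$ becomes $1$; summing over $s$ then just reassembles the unrestricted sum $\sum_{(j_1,\ldots,j_{N-1})}a_{j_1,\ldots,j_{N-1},p}^2$, and summing this over $p\in[d]$ gives $\|\TA\|_F^2$. The remaining $-\sum_p a_{p,\ldots,p}^2$ together with $-2\sum_{p>q}a_{p,\ldots,p}a_{q,\ldots,q}$ recombine, via $\big(\sum_p a_{p,\ldots,p}\big)^2 = \sum_p a_{p,\ldots,p}^2 + 2\sum_{p>q}a_{p,\ldots,p}a_{q,\ldots,q}$, into $-\trn{\TA}^2$, which yields Equation~\eqref{eq:eqn_symptotic_tr_rad}.

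For the upper bound~\eqref{eq:eqn_tr_rad_var_proof} I would not work from~\eqref{eq:eqn_symptotic_tr_rad} directly, since the $-\trn{\TA}^2$ term cannot simply be dropped. Instead I return to the decomposition $\Varn{X}=\sum_{p=1}^d\Varn{y_p}+2\sum_{p>q}\Covn{y_p}{y_q}$ used in the proof of Theorem~\ref{thm:trace_estimator_new}. By Corollary~\ref{rade_diag_cor}, $\sum_p\Varn{y_p}=\|\TA\|_F^2-\sum_{j=1}^d a_{j,\ldots,j}^2$, so it suffices to prove $2\sum_{p>q}\Covn{y_p}{y_q}\le\|\TA\|_F^2-\sum_{j=1}^d a_{j,\ldots,j}^2$. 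By the covariance formula of Theorem~\ref{thm:diag_rad_new}, the constraint $j_t\neq k_t$ forces $(k_1,\ldots,k_{N-1})$ to be the tuple obtained from $(j_1,\ldots,j_{N-1})$ by swapping $p\leftrightarrow q$ coordinatewise, so $\Covn{y_p}{y_q}$ is a sum over the tuples $(j_1,\ldots,j_{N-1})\in\{p,q\}^{N-1}\setminus\{(p,\ldots,p)\}$ of products $a_{j_1,\ldots,j_{N-1},p}\,a_{k_1,\ldots,k_{N-1},q}$. Bounding each product by $\tfrac12\big(a_{j_1,\ldots,j_{N-1},p}^2+a_{k_1,\ldots,k_{N-1},q}^2\big)$ and reindexing the complement half, $\Covn{y_p}{y_q}$ is at most $\tfrac12$ of a sum of squared tensor entries anchored at $p$ and at $q$ whose first $N-1$ indices range over $\{p,q\}$.

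The main obstacle is the counting that converts these pairwise bounds into the global bound after summing over all $p>q$: one must check that every squared entry $a_{j_1,\ldots,j_{N-1},r}^2$ with $(j_1,\ldots,j_{N-1})\neq(r,\ldots,r)$ is charged at most once. This holds because such a term is charged only when the tuple $(j_1,\ldots,j_{N-1})$ uses exactly one value other than $r$, and that value identifies the partner index of the pair; tuples using two or more distinct values different from $r$ are never charged. Consequently $2\sum_{p>q}\Covn{y_p}{y_q}\le\sum_{r=1}^d\big(\sum_{(j_1,\ldots,j_{N-1})}a_{j_1,\ldots,j_{N-1},r}^2-a_{r,\ldots,r}^2\big)=\|\TA\|_F^2-\sum_{j=1}^d a_{j,\ldots,j}^2$, and adding the variance contribution gives~\eqref{eq:eqn_tr_rad_var_proof}.

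For the sample bound I would mimic the proof of Corollary~\ref{rade_diag_cor}. Averaging $K$ independent copies of $X$, each formed from a fresh family $\g^{(1)},\ldots,\g^{(N-1)}$, yields an unbiased estimator of $\trn{\TA}$ with variance at most $\tfrac{2}{K}\big(\|\TA\|_F^2-\sum_{j=1}^d a_{j,\ldots,j}^2\big)$, and this average is a degree-$2(N-1)$ polynomial in the (jointly independent) Rademacher entries of all $K$ families. Applying the Hypercontractivity Concentration Inequality (Theorem~\ref{thm:hpercontractractivity}) with deviation $\lambda=\epsilon\,\trn{\TA}$ and requiring the resulting tail bound to be at most $\delta$ forces the exponent to exceed $2+\log(1/\delta)$, which rearranges to $K=O\big((\|\TA\|_F^2-\sum_{j=1}^d a_{j,\ldots,j}^2)(2+\log(1/\delta))^{2(N-1)}/(\epsilon^2\trn{\TA}^2)\big)$.
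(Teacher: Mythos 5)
Your proposal is correct and follows essentially the same route as the paper: set $\Expt{\RVar^4}=1$ in Theorem~\ref{thm:trace_estimator_new} and recombine the diagonal terms into $-\trn{\TA}^2$ to obtain Equation~\eqref{eq:eqn_symptotic_tr_rad}, bound the covariance cross terms via $2ab\le a^2+b^2$ to reach $2\left(\|\TA\|_F^2-\sum_{j=1}^d a_{j,\ldots,j}^2\right)$, and apply the hypercontractivity inequality (Theorem~\ref{thm:hpercontractractivity}) to the degree-$2(N-1)$ polynomial given by the $K$-sample average. Your explicit swap-structure and once-per-pair counting argument for $2\sum_{p>q}\Covn{y_p}{y_q}\le \|\TA\|_F^2-\sum_j a_{j,\ldots,j}^2$ just makes precise a step the paper's proof leaves implicit.
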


\begin{proof}
We know that the fourth moment of the Rademacher random variable is $1$, which implies $\Expt{\RVar^4}=1$.  Hence, from Equation~\eqref{eq:eqn_theor_var_rad} of Theorem~\ref{thm:trace_estimator_new}, we have

\begin{align*}
    \Var(X) &=  \sum_{p=1}^d\left( \sum_{s=0}^{N-1}  \left(  \sum_{\scriptsize
\begin{array}{c}
 (j_1, \ldots, j_{N-1})\\
\text{where $s$ of}~j_t, t \in [N-1]\\
\text{are equal to $i$}
\end{array}}    a_{j_1, \ldots, j_{N-1},p}^2   \right)  - a_{p, \ldots, p}^2 \right) \notag \\
 & + 2 \sum_{p>q}^d\left( \sum_{\scriptsize\begin{array}{c}
    (j_1,\hdots,j_{N-1}) \in \{p,q\} \\
    \text{and}~(k_1,\hdots,k_{N-1}) \in \{p,q\} \\
    \text{and}~ j_t \neq k_t ~\forall~ t\in[N-1]
    \end{array}} \hspace{-0.3cm} a_{j_1,\hdots,j_{N-1},p}a_{k_1,\hdots,k_{N-1},q} - a_{p,\hdots,p}a_{q,\hdots,q}    \right)\\
    &= \sum_{p=1}^d \sum_{s=0}^{N-1}  \left(  \sum_{\scriptsize
\begin{array}{c}
 (j_1, \ldots, j_{N-1})\\
\text{where $s$ of}~j_t, t \in [N-1]\\
\text{are equal to $i$}
\end{array}}    a_{j_1, \ldots, j_{N-1},p}^2   \right)  - \sum_{p=1}^d a_{p, \ldots, p}^2  \notag \\
 & + 2 \sum_{p>q}^d \sum_{\scriptsize\begin{array}{c}
    (j_1,\hdots,j_{N-1}) \in \{p,q\} \\
    \text{and}~(k_1,\hdots,k_{N-1}) \in \{p,q\} \\
    \text{and}~j_t \neq k_t ~\forall~ t \in [N-1]
    \end{array}} 
\hspace{-0.4cm} a_{j_1,\hdots,j_{N-1},p} a_{k_1,\hdots,k_{N-1},q}
 - 2 \sum_{p>q}^d a_{p,\hdots,p} a_{q,\hdots,q}\\ 
    &= \|\TA\|_F^2 - \tr\left({\TA}\right)^2 +2 \sum_{p>q}^d \sum_{\scriptsize\begin{array}{c}
    (j_1,\hdots,j_{N-1}) \in \{p,q\} \\
    \text{and}~(k_1,\hdots,k_{N-1}) \in \{p,q\} \\
    \text{and}~j_t \neq  k_t ~\forall~ t \in [N-1]
    \end{array}} \hspace{-0.4cm} a_{j_1,\hdots,j_{N-1},p}a_{k_1,\hdots,k_{N-1},q}  \numberthis \label{eq:121025_1}\\
    &\leq 2 \left( \|\TA\|_F^2 - \sum_{j=1}^d a_{j, \ldots,j}^2\right). \numberthis \label{eq:121025_2}
\end{align*}
The Equations~\eqref{eq:121025_1} and \eqref{eq:121025_2} hold due to the following facts
\begin{align*}
    \sum_{p=1}^d\left(\sum_{j_1,\hdots,j_{N-1}}a^2_{j_1,\hdots,j_{N-1},p}\right) = \|\TA\|_F^2,
\end{align*}
\begin{align*}
    \sum_{p=1}^d a_{p,\hdots,p}^2 +2 \sum_{p>q}a_{p,\hdots,p}a_{q,\hdots,q} = \left(\sum_{p=1}^d a_{p,\hdots,p}\right)^2 = \tr\left(\mathcal A\right)^2
\end{align*}
and 
\begin{align*}
a_{j_1,\hdots,j_{N-1},p}^2 + a_{k_1,\hdots,k_{N-1},q}^2 \geq 2a_{j_1,\hdots,j_{N-1},p} a_{k_1,\hdots,k_{N-1},q}. 
\end{align*}
    Let $\bar{X}$ denotes the average of $K$ $\text{i.i.d.}$ estimates of the $\tr(\TA)$ obtained using distinct sets of $\g^{(n)}$'s for $n \in [N-1]$. Since the estimates are \textit{i.i.d.}, we have
\begin{align}
    \Var(\bar{X}) \leq \frac{2\left( \| \TA\|_F^2 - \sum_{j=1}^d a_{j, \ldots, j}^2\right)}{K}. 
\end{align}
The variance of $\bar{X}$ is bounded, and $\bar{X}$ is a polynomial of degree $2(N-1)$ of independent Rademacher random variables (the entries corresponding to distinct sets of $\g^{(n)}$'s for $n \in [N-1]$). Then, for some absolute constant $R$ by utilizing the  Hypercontractivity Concentration Inequality stated in Thoerem~\ref{thm:hpercontractractivity}, we have
\begin{align}
    &\Pr\left(|\bar{X} -  \tr(\TA)| \geq \epsilon \cdot \tr(\TA) \right) \leq  e^2 \cdot e^{-\left( \frac{\epsilon^2 \tr(\TA)^2}{R \cdot \Var(\bar{X})}\right)^{\frac{1}{2(N-1)}}} \notag\\
    &= e^2 \cdot  e^{-\left( \frac{\epsilon^2 \cdot  \tr(\TA)^2 \cdot K}{R \cdot 2 \cdot \left( \|\TA\|_F^2 - \sum_{j=1}^d a_{j, \ldots,j}^2 \right)}\right)^{\frac{1}{2(N-1)}}} \notag\\
    & \leq \delta ~\left(\text{if we choose } K \geq  \frac{2 R \left( \|\TA\|_F^2 - \sum_{j=1}^d a_{j,\ldots,j}^2 \right) \qquad \left( 2+\log(1/\delta)\right)^{2(N-1)}}{\epsilon^2 \cdot \tr(\TA)^2} \right. \notag\\
    &\hspace{10cm} \text{ in above equation} \Bigg).  \notag
\end{align}
\end{proof}

\begin{restatable}{cor}{trNorCor}  \label{cor:trace_normal_new}
If the entries of $\g^{(n)}$, for $n \in [N]$, in Theorem~\ref{thm:trace_estimator_new} are \textit{i.i.d.} $\mathcal N(0,1)$, then
\begin{align}
        \Varn{X} & =\sum_{p=1}^d\left( \sum_{s=0}^{N-1} 3^s \left(  \sum_{\scriptsize
\begin{array}{c}
 (j_1, \ldots, j_{N-1})\\
\text{where $s$ of}~j_t, t \in [N-1]\\
\text{are equal to $i$}
\end{array}}    a_{j_1, \ldots, j_{N-1},p}^2   \right)  - a_{p, \ldots, p}^2 \right) \notag\\
 & + 2 \sum_{p>q}^d\left( \sum_{\scriptsize\begin{array}{c}
    (j_1,\hdots,j_{N-1}) \in \{p,q\} \\
    \text{and}~(k_1,\hdots,k_{N-1}) \in \{p,q\} \\
    \text{and}~j_t \neq k_t~\forall~ t \in [N-1]
    \end{array}} a_{j_1,\hdots,j_{N-1},p}a_{k_1,\hdots,k_{N-1},q} - a_{p,\hdots,p}a_{q,\hdots,q} \right) \label{eq:eqn_asymp_tr_normal} \\
    & \leq (3^{N-1}-1)\|\TA||_F^2.
        \label{eq:eqn_tr_normal_var_proof}
    \end{align}
Further, the mean of the  $K$ \textit{i.i.d.} estimates obtained using distinct sets of $\g^{(n)}$'s gives an $(\epsilon, \delta)$ approximation of $tr(\TA)$ for 
$$K \geq O\left( \frac{ \left(3^{N-1} -1 \right) \|\TA\|_F^2  \left( 2 + \log(1/\delta)\right)^{2(N-1)}}{ \left({\epsilon^2 \cdot \tr(\TA)^2}\right)}\right).$$
\end{restatable}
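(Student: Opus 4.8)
The plan is to derive Corollary~\ref{cor:trace_normal_new} directly from Theorem~\ref{thm:trace_estimator_new} by substituting the fourth moment of a standard Gaussian, exactly as Corollary~\ref{cor:trace_rademacher_new} was obtained in the Rademacher case. First I would recall that for $\RVar \sim \mathcal N(0,1)$ we have $\Expt{\RVar^4} = 3$, and plug this into Equation~\eqref{eq:eqn_theor_var_rad}; this immediately gives the exact variance expression~\eqref{eq:eqn_asymp_tr_normal}, requiring no work beyond the substitution.

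The substantive step is the upper bound~\eqref{eq:eqn_tr_normal_var_proof}. My approach would be to bound each of the two summations in the exact expression. For the diagonal (variance) terms, note that the inner double sum over all $(j_1,\dots,j_{N-1})$ partitioned by the number $s$ of indices equal to $p$ satisfies $\sum_{s=0}^{N-1} 3^s \big(\sum_{\text{$s$ of } j_t = p} a_{j_1,\dots,j_{N-1},p}^2\big) \le 3^{N-1} \sum_{j_1,\dots,j_{N-1}} a_{j_1,\dots,j_{N-1},p}^2$, since $3^s \le 3^{N-1}$ for all $s \le N-1$; summing over $p$ and using $\sum_{p}\sum_{j_1,\dots,j_{N-1}} a_{j_1,\dots,j_{N-1},p}^2 = \|\TA\|_F^2$ bounds the first group by $3^{N-1}\|\TA\|_F^2$, and dropping the negative $-\sum_p a_{p,\dots,p}^2$ only weakens things, so really the first group contributes at most $3^{N-1}\|\TA\|_F^2 - \sum_p a_{p,\dots,p}^2$. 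For the covariance terms, I would use the AM–GM inequality $a_{j_1,\dots,j_{N-1},p} a_{k_1,\dots,k_{N-1},q} \le \tfrac12\big(a_{j_1,\dots,j_{N-1},p}^2 + a_{k_1,\dots,k_{N-1},q}^2\big)$ as in the Rademacher proof, together with $-a_{p,\dots,p}a_{q,\dots,q} \le \tfrac12(a_{p,\dots,p}^2 + a_{q,\dots,q}^2)$, to bound the cross terms by a multiple of entries of $\|\TA\|_F^2$; combining the two bounds and absorbing constants into $3^{N-1}-1$ yields~\eqref{eq:eqn_tr_normal_var_proof}.

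The main obstacle — really the only place any care is needed — is getting the constant in the final bound to come out as exactly $(3^{N-1}-1)$ rather than something larger. One has to be careful that the covariance terms, when bounded via AM–GM, produce squared tensor entries that are already counted inside $3^{N-1}\|\TA\|_F^2$, so the $-\tr(\TA)^2$-type negative contributions and the $s$-grading of the coefficients conspire to shave the leading coefficient from $3^{N-1}$ down to $3^{N-1}-1$; I would track the $s=N-1$ term (which is exactly $3^{N-1}\sum_p a_{p,\dots,p}^2$) separately and cancel it against the $-a_{p,\dots,p}^2$ and $-a_{p,\dots,p}a_{q,\dots,q}$ terms, mirroring the bookkeeping already displayed in the $\mathcal N(0,1)$ diagonal corollary (Corollary~\ref{Normal_diag_cor}, Equations~\eqref{eq:eqn_var_yi_special_struct_new_normal}–\eqref{eq:eqn_var_yi_Rad}).

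Finally, for the sample-complexity claim I would repeat verbatim the argument from Corollary~\ref{cor:trace_rademacher_new}: the mean $\bar X$ of $K$ i.i.d.\ copies has variance at most $(3^{N-1}-1)\|\TA\|_F^2/K$, is a degree-$2(N-1)$ polynomial in independent centered Gaussians, so the Hypercontractivity Concentration Inequality (Theorem~\ref{thm:hpercontractractivity}) gives $\Pr(|\bar X - \tr(\TA)| \ge \epsilon\,\tr(\TA)) \le e^2 \exp\!\big(-(\epsilon^2 \tr(\TA)^2 K / (R\,(3^{N-1}-1)\|\TA\|_F^2))^{1/(2(N-1))}\big)$, and this is at most $\delta$ once $K \ge O\big((3^{N-1}-1)\|\TA\|_F^2 (2+\log(1/\delta))^{2(N-1)} / (\epsilon^2 \tr(\TA)^2)\big)$; I would note the proof is identical to the earlier one and omit the repeated computation.
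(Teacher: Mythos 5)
Your substitution of $\Expt{\RVar^4}=3$, the exact variance expression~\eqref{eq:eqn_asymp_tr_normal}, and the concentration step via Theorem~\ref{thm:hpercontractractivity} all match the paper. The gap is in how you handle the covariance block when proving the upper bound~\eqref{eq:eqn_tr_normal_var_proof}. You propose to apply AM--GM to every cross term \emph{and} to bound $-a_{p,\dots,p}a_{q,\dots,q} \le \tfrac12(a_{p,\dots,p}^2+a_{q,\dots,q}^2)$. But the restricted sum for each pair $(p,q)$ contains the purely diagonal product $a_{p,\dots,p}a_{q,\dots,q}$ (take $(j_1,\dots,j_{N-1})=(p,\dots,p)$, $(k_1,\dots,k_{N-1})=(q,\dots,q)$), and each diagonal entry pairs with all $d-1$ other indices. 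So your bound produces a contribution of order $\sum_{p>q}(a_{p,\dots,p}^2+a_{q,\dots,q}^2) = (d-1)\sum_p a_{p,\dots,p}^2$, which cannot be absorbed into $(3^{N-1}-1)\|\TA\|_F^2$: for the identity-like tensor ($a_{p,\dots,p}=1$, all other entries $0$) your route gives a covariance bound growing like $d^2$ while the target is $(3^{N-1}-1)d$ (and the true covariance contribution is in fact $0$ there). The paper instead cancels exactly: the diagonal--diagonal product inside the restricted sum offsets the subtracted $a_{p,\dots,p}a_{q,\dots,q}$ term for every pair, and only the remaining cross terms, whose factors are off-diagonal entries with indices confined to $\{p,q\}$, are bounded by AM--GM; since each such squared off-diagonal entry is counted at most once over all pairs $p>q$, this block contributes at most $\|\TA\|_F^2-\sum_j a_{j,\dots,j}^2$.

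Your third paragraph senses that some cancellation is needed, but misplaces it: you cannot "cancel the $s=N-1$ term $3^{N-1}\sum_p a_{p,\dots,p}^2$ against the $-a_{p,\dots,p}a_{q,\dots,q}$ terms" --- the signs of $a_{p,\dots,p}a_{q,\dots,q}$ are arbitrary and there is no matching structure; the cancellation lives entirely inside the covariance expression as described above. Also note that your cruder diagonal-block bound $3^s\le 3^{N-1}$ for all $s$ is not enough either: even with the covariance fixed, $3^{N-1}\|\TA\|_F^2 - \sum_p a_{p,\dots,p}^2 + \bigl(\|\TA\|_F^2 - \sum_p a_{p,\dots,p}^2\bigr)$ exceeds $(3^{N-1}-1)\|\TA\|_F^2$ whenever off-diagonal mass dominates; you must, as in the paper (and as in Corollary~\ref{Normal_diag_cor}), keep the $s=N-1$ term $3^{N-1}\sum_p a_{p,\dots,p}^2$ separate and bound the $s\le N-2$ terms by $3^{N-2}$ times the off-diagonal squares before recombining into $(3^{N-1}-1)\|\TA\|_F^2$.
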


\begin{proof} The fourth moment of the standard normal distribution is $3$.  So, from Equation~\eqref{eq:eqn_theor_var_rad} of Theorem~\ref{thm:trace_estimator_new}, we have
\begin{align*}
   & \Var(X) =  \sum_{p=1}^d\left( \sum_{s=0}^{N-1}  3^s\left(  \sum_{\scriptsize
\begin{array}{c}
 (j_1, \ldots, j_{N-1})\\
\text{where $s$ of}~j_t, t \in [N-1]\\
\text{are equal to $i$}
\end{array}}    a_{j_1, \ldots, j_{N-1},p}^2   \right)  - a_{p, \ldots, p}^2 \right) \notag \\
 &\qquad \quad + 2 \sum_{p>q}^d\left( \sum_{\scriptsize\begin{array}{c}
    (j_1,\hdots,j_{N-1}) \in \{p,q\} \\
    \text{and}~(k_1,\hdots,k_{N-1}) \in \{p,q\} \\
    \text{and}~j_t \neq k_t ~\forall~ t \in [N-1]
    \end{array}} a_{j_1,\hdots,j_{N-1},p}a_{k_1,\hdots,k_{N-1},q} - a_{p,\hdots,p}a_{q,\hdots,q}    \right)\\
 & = 3^{N-1} \sum_{p=1}^d a_{p,\ldots,p}^2 + \sum_{p=1}^d\sum_{s=0}^{N-2}  3^s \left(  \sum_{\scriptsize
\begin{array}{c}
 (j_1, \ldots, j_{N-1})\\
\text{where $s$ of}~j_t, t \in [N-1]\\
\text{are equal to $i$}
\end{array}} \hspace{-0.5cm}   a_{j_1, \ldots, j_{N-1},i}^2   \right)  - \sum_{p=1}^d a_{i, \ldots, i}^2 \notag\\
& \qquad \quad + 2 \sum_{p>q}\sum_{\scriptsize\begin{array}{c}
    (j_1,\hdots,j_{N-1}) \in \{p,q\} \\
    \text{and}~(k_1,\hdots,k_{N-1}) \in \{p,q\} \\
    \text{and}~ j_t \neq k_t ~\forall ~ t \in [N-1]
    \end{array}} a_{j_1,\hdots,j_{N-1},p}a_{k_1,\hdots,k_{N-1},q} - 2 \sum_{p>q} a_{p,\hdots,p}a_{q,\hdots,q}\\ 
&\leq 3^{N-1} \sum_{p=1}^d a_{p,\ldots,p}^2 + 3^{N-2}\sum_{p=1}^d\sum_{s=0}^{N-2}   \left(  \sum_{\scriptsize
\begin{array}{c}
 (j_1, \ldots, j_{N-1})\\
\text{where $s$ of}~j_t, t \in [N-1]\\
\text{are equal to $i$}
\end{array}} \hspace{-0.3cm}   a_{j_1, \ldots, j_{N-1},i}^2   \right)  - \sum_{p=1}^d a_{i, \ldots, i}^2 \notag\\
&  \qquad + 2 \sum_{p>q}\sum_{\scriptsize\begin{array}{c}
    (j_1,\hdots,j_{N-1}) \in \{p,q\} \\
    \text{and}~(k_1,\hdots,k_{N-1}) \in \{p,q\} \\
    \text{and}~ j_t \neq k_t ~\forall~ t \in [N-1]
    \end{array}} a_{j_1,\hdots,j_{N-1},p}a_{k_1,\hdots,k_{N-1},q} - 2 \sum_{p>q} a_{p,\hdots,p}a_{q,\hdots,q}\\ 
     \begin{split}
          &= \left(3^{N-1} -1 \right) \sum_{p=1}^{d} a_{p,\ldots, p}^2 + 3^{N-2} \sum_{(j_1, \ldots, j_{N}) \in [d]^N \setminus \{(p, \ldots,p)| p \in [d]\} } a_{j_1, \ldots,  j_N}^2   \\
     &\phantom{=}\qquad + 2 \sum_{p>q}\sum_{\scriptsize\begin{array}{c}
    (j_1,\hdots,j_{N-1}) \in \{p,q\} \\
    \text{and}~(k_1,\hdots,k_{N-1}) \in \{p,q\} \\
    \text{s.t.}~ j_t  \neq k_t ~\forall~ t \in [N-1],\\
    (j_1, \ldots, j_{N-1}) \neq (p, \ldots,p)\\
    \text{and } (k_1, \ldots, k_{N-1}) \neq (q, \ldots,q)
    \end{array}} a_{j_1,\hdots,j_{N-1},p}a_{k_1,\hdots,k_{N-1},q}\\
    \end{split}\\
    \begin{split}
          &\leq \left(3^{N-1} -1 \right) \sum_{p=1}^{d} a_{p,\ldots, p}^2 + 3^{N-2} \sum_{(j_1, \ldots, j_{N}) \in [d]^N \setminus \{(p, \ldots,p)| p \in [d]\} } a_{j_1, \ldots,  j_N}^2   \\
     &\phantom{=}\qquad +  \sum_{p>q}\sum_{\scriptsize\begin{array}{c}
    (j_1,\hdots,j_{N-1}) \in \{p,q\} \\
    \text{and}~(k_1,\hdots,k_{N-1}) \in \{p,q\} \\
    \text{s.t.}~ j_t  \neq k_t ~\forall~ t \in [N-1],\\
    (j_1, \ldots, j_{N-1}) \neq (p, \ldots,p)\\
    \text{and } (k_1, \ldots, k_{N-1}) \neq (q, \ldots,q)
    \end{array}} a_{j_1,\hdots,j_{N-1},p}^2 + a_{k_1,\hdots,k_{N-1},q}^2\\
    \end{split}\\
    \begin{split}
          &\leq \left(3^{N-1} -1 \right) \sum_{p=1}^{d} a_{p,\ldots, p}^2 + 3^{N-2} \sum_{(j_1, \ldots, j_{N}) \in [d]^N \setminus \{(p, \ldots,p)| p \in [d]\} } a_{j_1, \ldots,  j_N}^2 \notag\\
          &\phantom{=}\qquad + \left(\|\TA\|_F^2 - \sum_{j=1}^d a_{j, \ldots, j}^2 \right)\\
    \end{split}\\
    & = \left( 3^{N-1} -2 \right) \sum_{j=1}^d a_{j, \ldots, j}^2  + 3^{N-2} \sum_{(j_1, \ldots, j_{N}) \in [d]^N \setminus \{(p, \ldots,p)| p \in [d]\} } a_{j_1, \ldots,  j_N}^2 + \| \TA \|_{F}^2 \nonumber\\
    & \leq  \left( 3^{N-1} -2 \right) \left(  \sum_{j=1}^d a_{j, \ldots, j}^2 + \sum_{(j_1, \ldots, j_{N}) \in [d]^N \setminus \{(p, \ldots,p)| p \in [d]\} } \right) + \| \TA \|_{F}^2 \nonumber\\
     & = \left( 3^{N-1} -2 \right) \|\TA\|_{F}^2 + \| \TA \|_{F}^2 \nonumber\\
    & = \left( 3^{N-1} - 1 \right) \|\TA\|_{F}^2. \numberthis \label{eq:eqn_tr_normal_var_proof_appendix}
\end{align*}
We can easily prove the concentration bound by utilizing the  Hypercontractivity Concentration Inequality stated in Theorem~\ref{thm:hpercontractractivity} and employing
the same steps as used in the proof of Corollary~\ref{cor:trace_rademacher_new}.
\end{proof}

If the $(\epsilon, \delta)$ estimator is not required to be linear, the exponential dependence on the tensor order $N$ in the term $\log(1/\delta)$ appearing in the sample complexity bounds in the above corollaries can be eliminated using the \textit{median-of-means} estimator. The following corollaries provide bounds with improved dependence on $\delta$ by leveraging Lemma~\ref{lem:median_of_means}.

\begin{cor}
    Let $X_1, \ldots, X_{K}$ be the \text{i.i.d.} estimates of $\trn{\TA}$ obtained using $K$ different set of $\g^{(n)}$ for $n \in [N-1]$, where entries of $\g^{(n)}$ are \textit{i.i.d} Rademacher. Divide the $K$ estimates randomly into $r$ disjoint groups. The median-of-means of these $r$ groups gives an $(\epsilon,\delta)$ approximation  for $\trn{\TA}$ for $K \geq \frac{64\left( \|\TA\|_F^2 - \sum_{j=1}^d a_{j,\hdots,j}^2\right) \log(1/\delta)}{\epsilon^2 \trn{\TA}^2}$ and $r  = 8\log(1/\delta)$.
\end{cor}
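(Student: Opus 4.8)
The plan is to obtain the result as an immediate consequence of the median-of-means bound (Lemma~\ref{lem:median_of_means}) together with the variance estimate already established in Corollary~\ref{cor:trace_rademacher_new}. First I would record the two facts about the individual estimates $X_1,\ldots,X_K$: by Theorem~\ref{thm:trace_estimator_new} each $X_t$ is unbiased, $\En{X_t} = \trn{\TA}$, and by Corollary~\ref{cor:trace_rademacher_new} (Equation~\eqref{eq:eqn_tr_rad_var_proof}) each has variance $\sigma^2 := \Varn{X_t} \le 2\big(\|\TA\|_F^2 - \sum_{j=1}^d a_{j,\ldots,j}^2\big)$. Since the $X_t$ are built from disjoint sets of independent Rademacher vectors, they are i.i.d., so Lemma~\ref{lem:median_of_means} applies verbatim.

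Next, I would split the $K$ estimates into $r = 8\log(1/\delta)$ groups of equal size $s = K/r$ (taking $s = \lceil K/r\rceil$ if $K/r$ is not an integer, which only decreases the error), compute the $r$ group means, and take their median $\mu_{MM}$. Lemma~\ref{lem:median_of_means} then gives, with probability at least $1-\delta$,
\[
|\mu_{MM} - \trn{\TA}| \le \sigma\sqrt{4/s}.
\]

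Finally, to upgrade this to an $(\epsilon,\delta)$-approximation in the sense of Definition~\ref{def:eps_delta_approx}, I would impose $\sigma\sqrt{4/s} \le \epsilon\,\trn{\TA}$; squaring, this reads $s \ge 4\sigma^2/(\epsilon^2\,\trn{\TA}^2)$, and by the variance bound it suffices to require $s \ge 8\big(\|\TA\|_F^2 - \sum_{j=1}^d a_{j,\ldots,j}^2\big)/(\epsilon^2\,\trn{\TA}^2)$. Multiplying through by $r = 8\log(1/\delta)$ yields $K = rs \ge 64\big(\|\TA\|_F^2 - \sum_{j=1}^d a_{j,\ldots,j}^2\big)\log(1/\delta)/(\epsilon^2\,\trn{\TA}^2)$, which is exactly the claimed sample bound.

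I do not expect a genuine obstacle here: the corollary is a mechanical specialization of Lemma~\ref{lem:median_of_means}, and all the real work — the $2\big(\|\TA\|_F^2 - \sum_j a_{j,\ldots,j}^2\big)$ variance bound — was already done in Corollary~\ref{cor:trace_rademacher_new}. The only points needing a little care are tracking the constant $4\cdot 2\cdot 8 = 64$, the divisibility of $K$ by $r$ (absorbed by the ceiling), and the standing tacit assumption $\trn{\TA}\neq 0$ that makes the relative-error guarantee meaningful.
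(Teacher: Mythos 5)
Your proof is correct and is exactly the intended argument: the paper states this corollary without a separate proof, as an immediate combination of the variance bound $\Varn{X} \le 2\bigl(\|\TA\|_F^2 - \sum_{j=1}^d a_{j,\ldots,j}^2\bigr)$ from Corollary~\ref{cor:trace_rademacher_new} with Lemma~\ref{lem:median_of_means}, and your constant accounting ($4\cdot 2\cdot 8 = 64$, $r = 8\log(1/\delta)$) matches the stated bound. No issues.
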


\begin{cor}
    Let $y_i^{(1)}, \ldots, y_i^{(K)}$ be the \text{i.i.d.} estimates of $a_{i, \ldots, i}$ obtained using $K$ different set of $\g^{(n)}$ for $n \in [N-1]$, where entries of $\g^{(n)}$ are \textit{i.i.d} $\mathcal{N}(0,1)$. Randomly divide the $K$ estimates into $r$ disjoint groups. The median-of-means of these $r$ groups yields an $(\epsilon,\delta)$ approximation  of $\trn{\TA}$ for $K \geq \frac{32 \left(3^{N-1} -1 \right) \|\TA\|_F^2 \log(1/\delta)}{\epsilon^2 \trn{\TA}^2}$ and $r = 8\log(1/\delta)$.
\end{cor}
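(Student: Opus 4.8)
The plan is to apply the median-of-means Lemma~\ref{lem:median_of_means} to the i.i.d.\ trace estimates, feeding in the variance bound already established in Corollary~\ref{cor:trace_normal_new}. First I would note that each of the $K$ trace estimates (call the $k$-th one $X_k$, built from its own independent set of vectors $\g^{(1)},\dots,\g^{(N-1)}$) is unbiased, $\Expt{X_k}=\trn{\TA}$ by Theorem~\ref{thm:trace_estimator_new}, and satisfies $\sigma^2 := \Varn{X_k} \le (3^{N-1}-1)\|\TA\|_F^2$ by Corollary~\ref{cor:trace_normal_new}. Because distinct trials use freshly and independently drawn vectors, the $X_k$ are genuinely i.i.d., so Lemma~\ref{lem:median_of_means} applies verbatim with $\mu=\trn{\TA}$ and $\sigma$ as above.

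Next, writing $K=rs$ with $r=8\log(1/\delta)$ groups of size $s$, Lemma~\ref{lem:median_of_means} gives, with probability at least $1-\delta$,
\[
|\mu_{MM}-\trn{\TA}| \;\le\; \sigma\sqrt{4/s} \;\le\; \sqrt{\frac{4\,(3^{N-1}-1)\,\|\TA\|_F^2}{s}}.
\]
To obtain the relative-error guarantee $|\mu_{MM}-\trn{\TA}|\le \epsilon\,\trn{\TA}$ required by Definition~\ref{def:eps_delta_approx}, it suffices that the right-hand side be at most $\epsilon\,\trn{\TA}$, i.e.\ that $s \ge 4(3^{N-1}-1)\|\TA\|_F^2/(\epsilon^2\,\trn{\TA}^2)$. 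Multiplying through by $r=8\log(1/\delta)$ then gives $K=rs \ge 32\,(3^{N-1}-1)\,\|\TA\|_F^2\,\log(1/\delta)/(\epsilon^2\,\trn{\TA}^2)$, which is precisely the claimed threshold; the case of Rademacher entries is identical, using the variance bound $2(\|\TA\|_F^2-\sum_j a_{j,\dots,j}^2)$ from Corollary~\ref{cor:trace_rademacher_new} in place of the Gaussian one.

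There is no genuine obstacle here — it is a direct substitution — but a few points warrant a line of justification: (i) independence across the $K$ trials, so that the $r$ group means are themselves independent and Lemma~\ref{lem:median_of_means} is applicable; (ii) $\trn{\TA}\neq 0$, so that the relative-error notion is meaningful and the division by $\trn{\TA}^2$ is legitimate; and (iii) that $s$ and $r$ are to be read as rounded up to the nearest integer, which only strengthens the inequality. I would close by remarking that the payoff of using median-of-means in place of the Hypercontractivity Concentration Inequality (Theorem~\ref{thm:hpercontractractivity}) employed in Corollary~\ref{cor:trace_normal_new} is that the $\log(1/\delta)$ factor now appears to the first power rather than raised to the power $2(N-1)$, at the mild cost of a non-linear estimator.
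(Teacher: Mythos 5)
Your proposal is correct and follows exactly the route the paper intends: the corollary is stated as a direct consequence of the median-of-means bound (Lemma~\ref{lem:median_of_means}) combined with the unbiasedness from Theorem~\ref{thm:trace_estimator_new} and the variance bound $(3^{N-1}-1)\|\TA\|_F^2$ from Corollary~\ref{cor:trace_normal_new}, and your substitution with $r=8\log(1/\delta)$ and $s \ge 4(3^{N-1}-1)\|\TA\|_F^2/(\epsilon^2\trn{\TA}^2)$ reproduces the claimed threshold on $K$. Your side remarks (independence of the group means, $\trn{\TA}\neq 0$, integer rounding, and correctly reading the statement as concerning trace estimates $X_1,\ldots,X_K$ rather than the diagonal estimates named in its first sentence) are sound and only add clarity.
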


\noindent \textbf{Comment on the tightness of the variance upper bound given in Corollaries~\ref{cor:trace_rademacher_new} and \ref{cor:trace_normal_new}:} 
 Equations~\eqref{eq:eqn_symptotic_tr_rad} and \eqref{eq:eqn_asymp_tr_normal} give the exact variance of our trace estimation proposal when the elements of $\g^{(n)}$ are sampled from the Rademacher and Gaussian distribution, respectively. We upper bound it in Equation~\eqref{eq:eqn_tr_rad_var_proof}, and \eqref{eq:eqn_tr_normal_var_proof} to obtain their clean and interpretable expression, which we subsequently used for giving concentration bounds. We perform numerical simulations to understand the tightness of these upper bounds. We show it for  Rademacher distribution. We experimentally compute the ratio of the upper bound and the exact variance expression by computing these terms for the tensors having all entries $1$, with different combinations of $N$ and $d$.  Figure~\ref{fig:upper_bound} presents the graph of the ratio of exact variance and upper bound w.r.t. mode size ($d$) for different values of $N$. From Figure~\ref{fig:upper_bound}, we observe that the variance's upper bound stated in Equation~\eqref{eq:eqn_tr_rad_var_proof} becomes loose as we increase the value of $N$ and $d$.
  \begin{figure}
  \centering
     \includegraphics[width=0.55\textwidth]{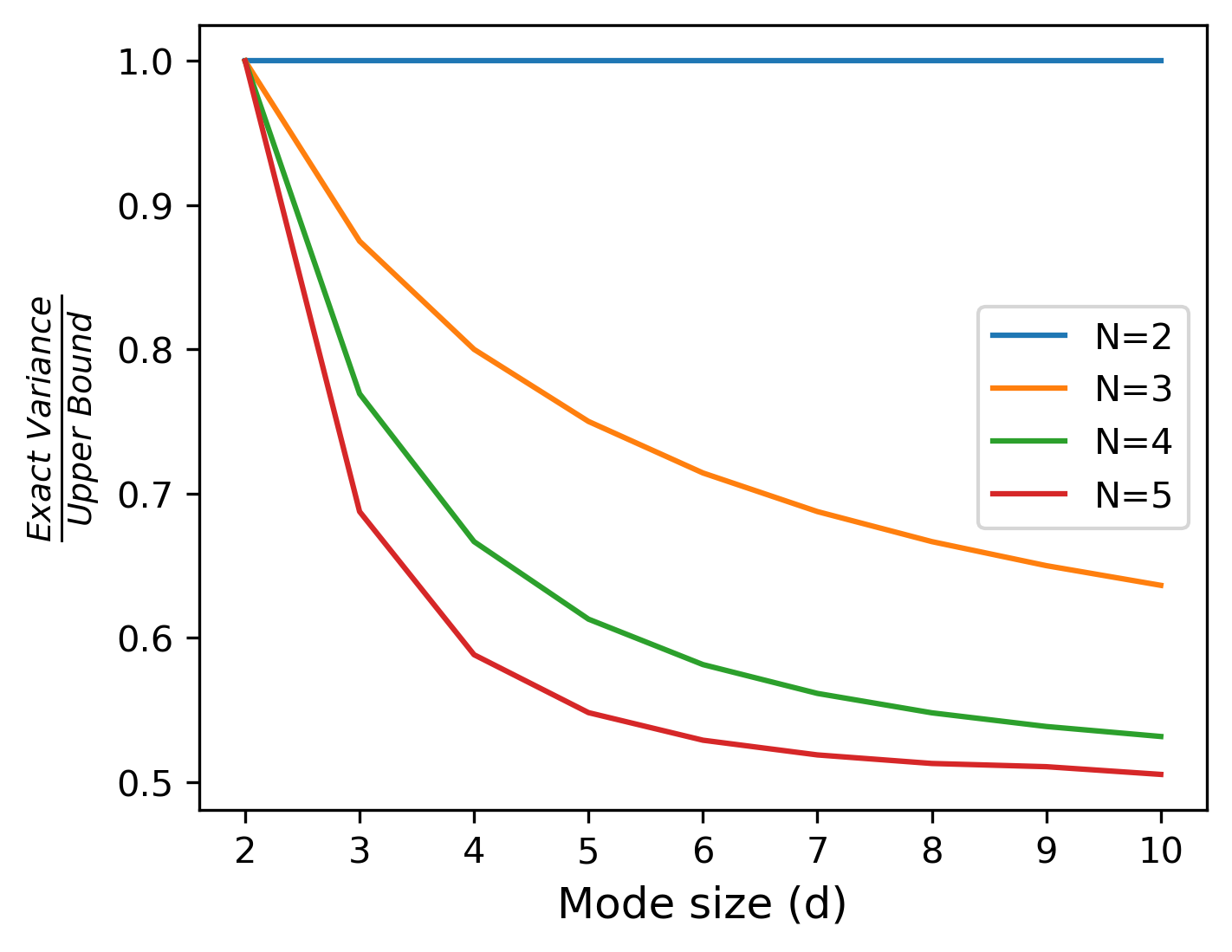}
     \captionof{figure}{Analysis of the tightness of the upper bound (Equation~\eqref{eq:eqn_tr_rad_var_proof}) w.r.t. the exact variance expression (Equation~\eqref{eq:eqn_symptotic_tr_rad}) for different combinations of $N$ and $d$. $N$ denotes the order of the tensor. The exact variance to upper bound ratio being closer to $1$ means the upper bound is tight.}
     \label{fig:upper_bound}
  \end{figure}
 
\section{Experimental Results} \label{sec:experiment}
We complement the theoretical analysis of our proposals via supporting experiments on synthetic datasets. Our experimental results also validate that that the variance of the Rademacher distribution-based diagonal estimator is smaller than that of the Normal distribution based estimator.
 We generate our datasets as follows: let $\alpha$ denote the ratio of the sum of squares of diagonal entries and squared Frobenius norm, and $N$ denote the order of the tensor. We randomly generate tensors for different values of $\alpha$ and $N$, while keeping the dimension along each mode as $100$.\\

\noindent \textbf{Experimental Setup:} In our experiments, we choose $N \in \{2,3,4\}$ and $\alpha \in \{0.2,0.4,0.6,0.8\}$. For each combination of $N$ and $\alpha$, we compute the $K$ \textit{i.i.d.} estimate of diagonal entries and trace using our proposals (Definition~\ref{def:diag_rad} and \ref{def:tr_Rad_tr} respectively) and consider their average as a representative estimate. That is, we take the mean of $K$ tensor-vector queries as the representative estimate.  To evaluate the quality of the diagonal estimate, we calculate the \emph{absolute relative errors} using the following formula: $\left| \frac{\bar{y}_i - a_{i, \ldots, i}}{a_{i, \ldots, i}} \right|$, where $\bar{y}_i$ denotes the average of $K$ \textit{i.i.d.} estimates of diagonal element $a_{i,\ldots, i}$ obtained using our diagonal estimator Definition~\ref{def:diag_rad}. Further, to evaluate the quality of the trace estimate, we use the following expression: $\left| \frac{\bar{X} - \tr(\TA)}{\tr(\TA) } \right|$, where $\bar{X}$ denotes the average of $K$ \textit{i.i.d.} estimates of $\tr(\TA)$ obtained using our trace estimator proposal Definition~\ref{def:tr_Rad_tr}.  In our experiments, we use  $K \in \{2,4,6,8,10,12,14,16,18,20\}$.

Our experimental study considers the diagonal entries and trace estimator using both Rademacher and Normal distribution. We present their comparison based on the Mean of the Absolute Relative Errors (MARE) observed over $100$ independent experimental runs. A smaller value of the MARE is an indication of a better estimate. Further, we analyze the variability of the estimators by generating boxplots of the \emph{signed relative errors} from the same 100 runs. A smaller interquartile range in the boxplot indicates lower variance and provides further insight about the consistency of the estimators. The \emph{signed relative error} for the diagonal estimates is defined as $\frac{\bar{y}_i - a_{i,\ldots,i}}{|a_{i,\ldots,i}|}$, where \( \bar{y}_i \) denotes the average of \( K \) i.i.d.\ estimates of the \( i \)-th diagonal entry, and \( a_{i,\ldots,i} \) is the corresponding true value. Similarly, the \textit{signed relative error} for the trace estimate is defined as $\frac{\bar{X} - \tr(\mathcal{A})}{|\tr(\mathcal{A})|}$, where \( \bar{X} \) is the average of \( K \) i.i.d.\ trace estimates, and \( \tr(\mathcal{A}) \) is the true trace of the tensor.\\ 

\textbf{Diagonal Estimator:} We summarise experimental observations on our diagonal estimator in Figures~\ref{fig:diag_boxplot_3_cross_4} and \ref{fig:diag_mare_3_cross_4}. Figure~\ref{fig:diag_mare_3_cross_4}  depicts the comparison of Rademacher and Normal distribution-based diagonal estimators for a randomly chosen diagonal element based on mean absolute relative error, over $100$ experimental runs for $N \in \{2,3,4\}$ and $\alpha \in \{0.2,0.4,0.6,0.8\}$. Figure~\ref{fig:diag_boxplot_3_cross_4} presents the variance analysis of the relative errors via boxplots observed over $100$ runs for Rademacher and Normal distribution-based diagonal estimators for a randomly chosen diagonal element.
\begin{figure}[t]
    \centering
    \includegraphics[width=0.95\textwidth]{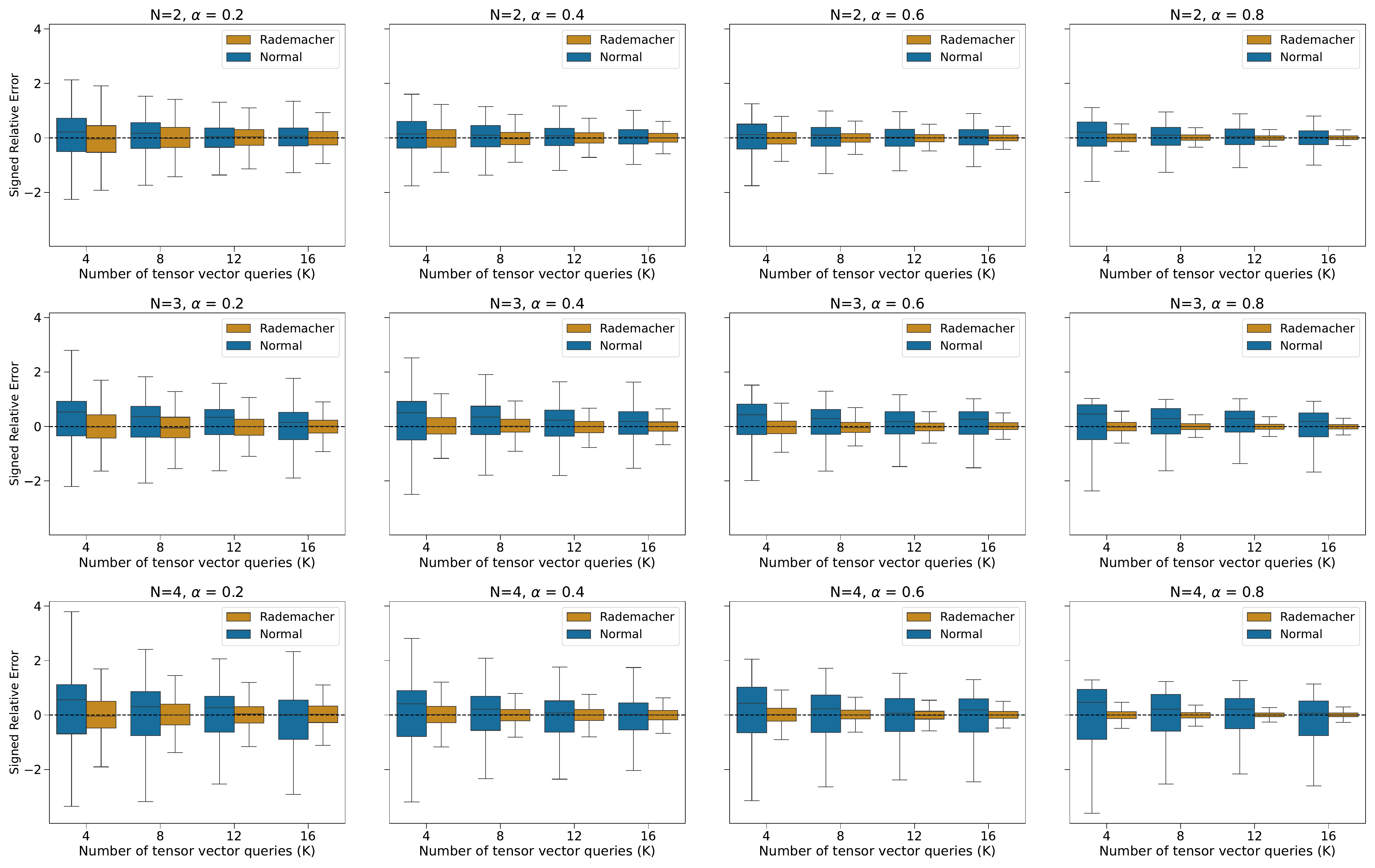}
    \caption{Variance analysis of   Rademacher and Normal distribution based diagonal estimators via boxplots using the relative errors observed in  $100$ runs. $N$ and $\alpha$ denote the order of the tensor and the ratio of the sum of the square of diagonal entries and the square Frobenius norm of the tensor, respectively. The smaller interquartile range is an indication of a smaller variance.}
    \label{fig:diag_boxplot_3_cross_4}
\end{figure}
\begin{figure}[t]
    \centering
    \includegraphics[width=0.95\textwidth]{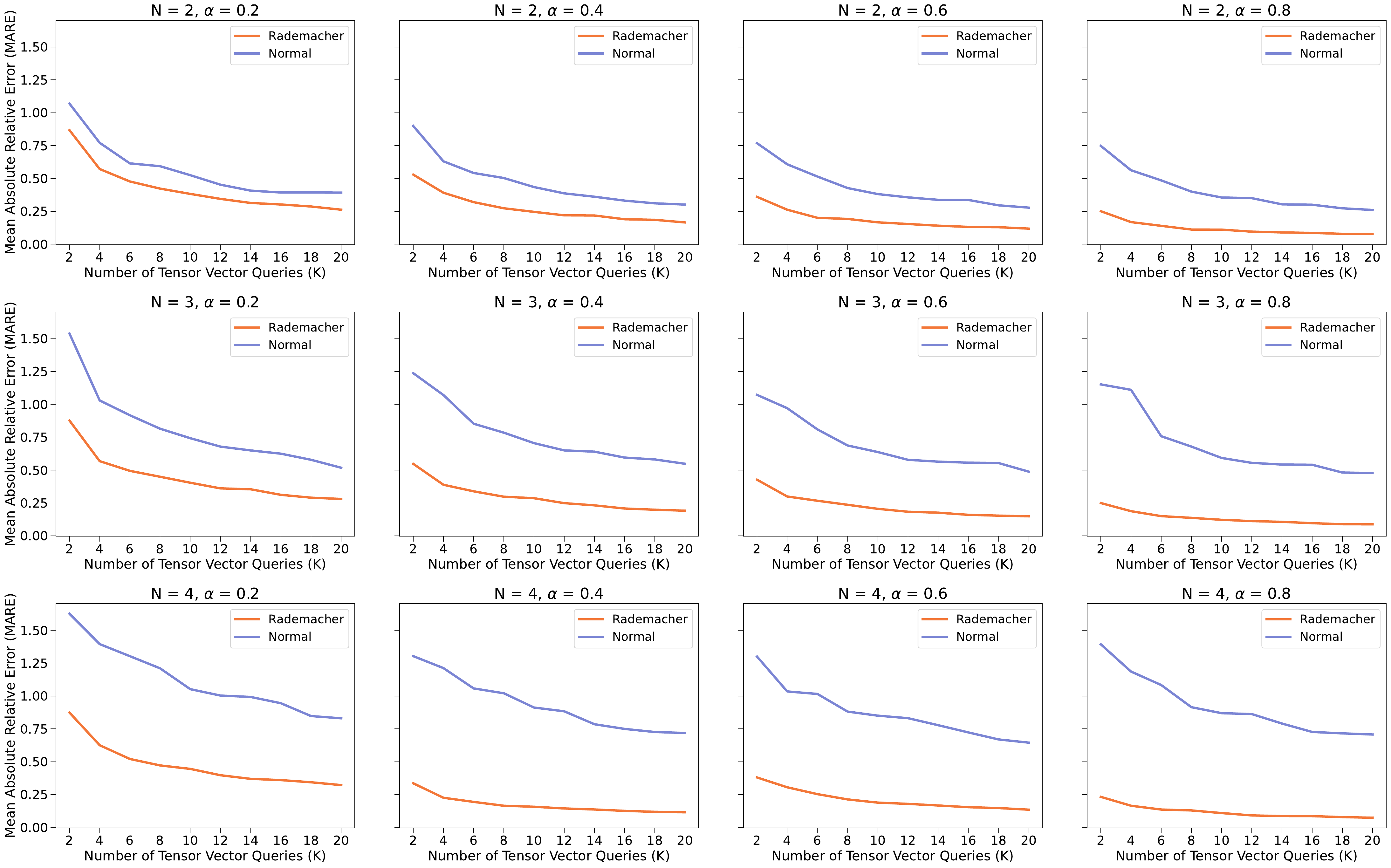}
    \caption{Comparison among the Rademacher and the normal distribution based diagonal estimators on the basis of mean absolute relative error over $100$ experimental runs for $N \in \{2,3,4\}$. $N$ and $\alpha$ denote the order of the tensor and the ratio of the sum of the squares of diagonal entries and the square Frobenius norm of the tensor, respectively. The smaller value of the mean absolute relative error indicates better estimates.}
    \label{fig:diag_mare_3_cross_4}
\end{figure}

\textbf{Insight:}  From Figure~\ref{fig:diag_boxplot_3_cross_4}, it is evident that the interquartile range of the boxplots of the Rademacher distribution-based diagonal estimator is smaller than that of the Normal distribution-based estimator. This implies that the variance of the Rademacher distribution-based diagonal estimator is smaller than that of the Normal distribution based estimator.  The interquartile range of the boxplots associated with the Rademacher distribution-based diagonal estimator decreases as the value of $\alpha$ (the ratio of the sum of squares of diagonal entries to the Frobenius norm of the tensor) increases and remains independent of the value of $N$ (order of the tensor). This observation aligns with the theoretical bounds on variance stated in Equation~\eqref{eq:eqn_0410_1}. On the other hand, the interquartile range of the boxplots for the Normal distribution-based diagonal estimator increases with $N$ but remains independent of the value of $\alpha$. This also aligns with our theoretical expression in Equation~\eqref{eq:eqn_var_yi_Rad}, where the estimates' variance increases with $N$.
Similarly, from Figure~\ref{fig:diag_mare_3_cross_4}, it is evident that the Rademacher distribution-based diagonal estimator outperforms the corresponding Normal distribution-based diagonal estimator. Furthermore, we note that the Rademacher distribution-based estimator's Mean Absolute Relative Error (MARE) decreases as $\alpha$ increases and remains independent of $N$. In contrast, the MARE of the Normal distribution-based diagonal estimator increases with an increase in $N$ but remains independent of $\alpha$. These observations are in line with the observations related to the boxplots in Figure~\ref{fig:diag_boxplot_3_cross_4} and are consistent with the theoretical variance and concentration bounds of the respective estimators.

\textbf{Trace Estimator:} \label{subsection:exp_trace}
We summarise our experimental findings for trace estimation in Figures~\ref{fig:tr_boxplot} and \ref{fig:tr_mare}.  Figure~\ref{fig:tr_boxplot} presents the variance analysis of the relative errors observed over $100$ runs  via boxplots for $N \in \{2, 3,4\}$ and $\alpha \in \{0.2,0.4,0.6,0.8\}$.  Figure~\ref{fig:tr_mare} presents the comparison based on a mean absolute relative error, over $100$ experimental runs for Rademacher and Normal distribution-based trace estimators.
\begin{figure}[t]
    \centering
    \includegraphics[width=0.95\textwidth]{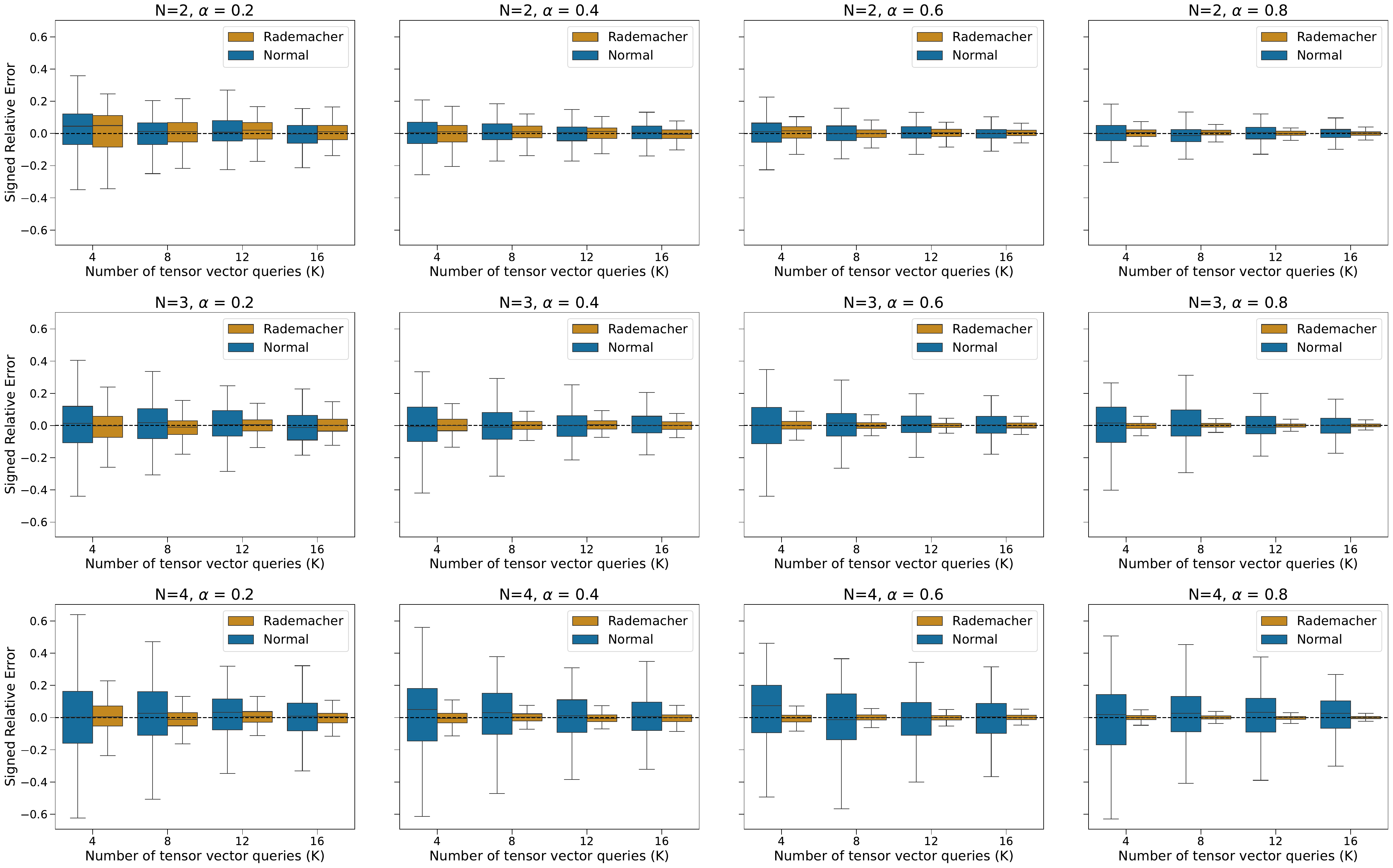}
    \caption{Variance analysis of   Rademacher and Normal distribution based trace estimators via boxplots using the relative errors observed in  $100$ runs. $N$ and $\alpha$ denote the order of the tensor and the ratio of the sum of the square of diagonal entries and the square Frobenius norm of the tensor, respectively. The smaller interquartile range is an indication of a smaller variance.}
    \label{fig:tr_boxplot}
\end{figure}

\textbf{Insight:}  From Figure~\ref{fig:tr_boxplot}, it is clear that the interquartile range of the boxplots of the Rademacher distribution-based trace estimator is smaller than that of the Normal distribution-based estimator, which implies that the variance of the Rademacher distribution-based trace estimator is smaller than that of Normal distribution based estimator. We observe the interquartile range of the boxplots corresponding to the Rademacher distribution-based trace estimator decreases as we increase the value of $\alpha$ (ratio of the sum of squares of the diagonal entries and the Frobenius norm of the tensor) and remains independent of the value of $N$ (order of the tensor). This aligns with our theoretical bounds on the variance stated in Equation~\eqref{eq:eqn_tr_rad_var_proof}. However, the interquartile range of the boxplots corresponding to the Normal distribution-based trace estimator increases with the value of $N$ and remains independent of the value of  $\alpha$. This also aligns with our theoretical expression in Equation~\eqref{eq:eqn_tr_normal_var_proof} where the estimates' variance increases with $N$.
\begin{figure}[t]
    \centering
    \includegraphics[width=0.95\textwidth]{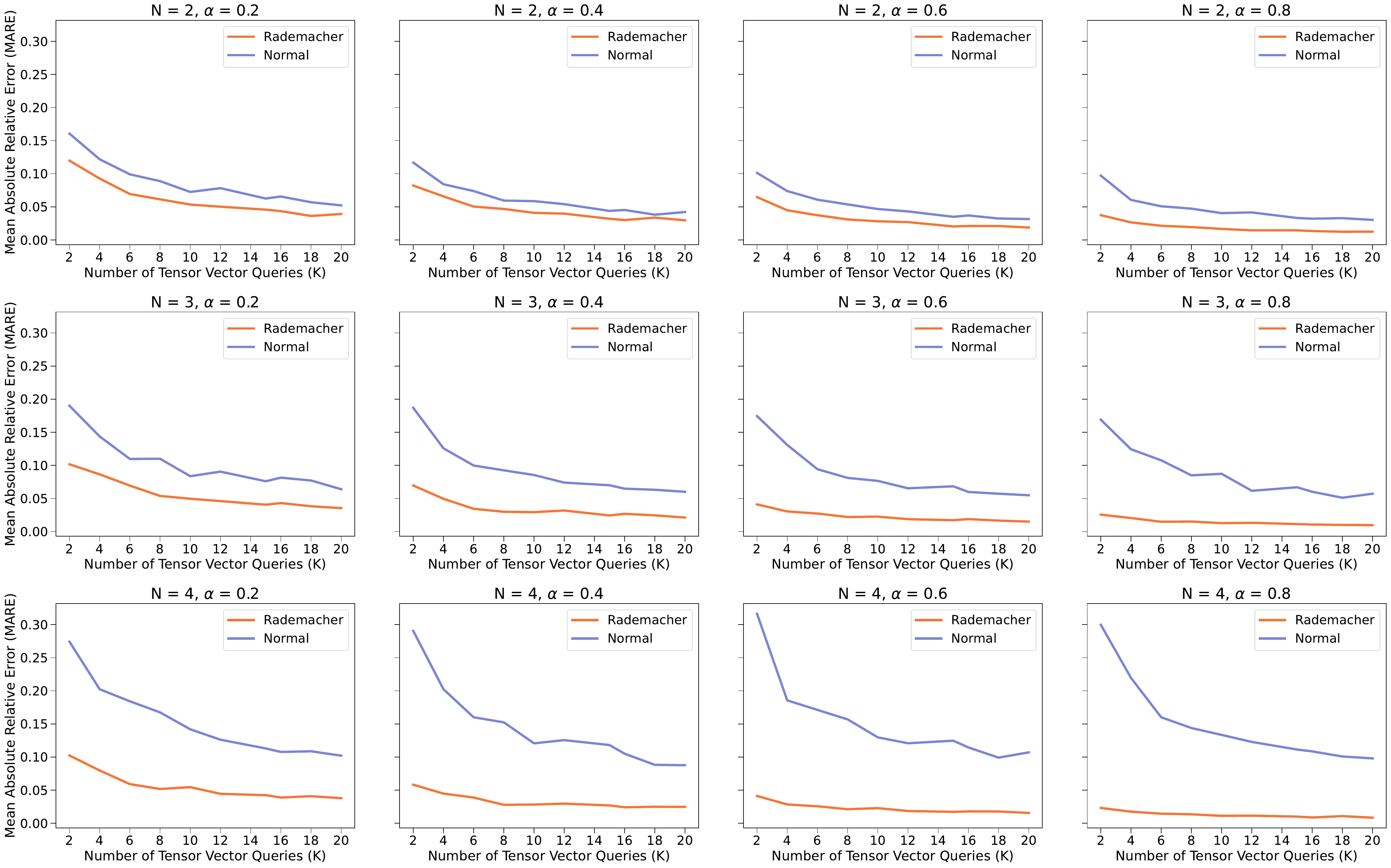}
    \caption{Comparison among the Rademacher and the normal distribution based trace estimators on the basis of mean absolute relative error over $100$ experimental runs for $N \in \{2,3,4\}$. $N$ and $\alpha$ denote the order of the tensor and the ratio of the sum of the square of diagonal entries and the square Frobenius norm of the tensor, respectively. The smaller value of mean absolute relative error indicates better estimates.}
    \label{fig:tr_mare}
\end{figure}

Similar to Figure~\ref{fig:tr_boxplot}, from Figure~\ref{fig:tr_mare}, it is also evident that the Rademacher distribution-based trace estimator performs better than the corresponding normal distribution-based trace estimator. Further, we observe that the MARE of the Rademacher distribution-based estimator decreases with an increase in the value of $\alpha$ and remains independent of the value of $N$. In contrast, the MARE of the Normal distribution-based trace estimator increases with an increase in the value of $N$ and remains independent of the value of $\alpha$. These insights support the observations corresponding to the boxplots (Figure~\ref{fig:tr_boxplot}) and align with the respective estimators' theoretical variance and concentration bounds.

\section{Conclusion \& open questions} \label{sec:conclusion}
 We proposed unbiased estimators for the trace and diagonal entries of higher-order tensors, under the tensor-vector multiplication queries model. Our proposals generalize the classical Hutchinson's trace~\citep{hutchinson1989stochastic}, and the diagonal elements estimators~\citep{bekas2007estimator} of matrices to higher order tensors as our estimators reduce to these estimators for $N=2$. We presented a theoretical analysis of our proposals and provided their $(\epsilon, \delta)$ estimators. 
Our proposals are simple, effective and easy to implement. We hope our proposals will benefit applications involving computing the trace or diagonal entries of higher-order tensors when tensor entries are accessed via tensor-vector queries. We state and give several open questions and research directions below.

\begin{inparaenum}[\itshape a)\upshape]
\item  One of the major research directions is to derive a tighter upper bound on the query complexity for the trace and diagonal estimators proposed in this work. Further, deriving a lower bound on the number of samples is also an interesting open question of the work. 
\item The second open question is how the structural properties of tensors, such as symmetry, sparsity or low-rankness, etc, can be exploited to design improved algorithms for trace and diagonal estimation, analogous to Hutch++ \citep{meyer2021hutch++}, XTRACE~\citep{epperly2024xtrace} and Diag++ \citep{baston2022stochastic} for matrices.  \item Another interesting research direction is improving the proposed estimators by leveraging variance reduction techniques such as control variate (CV) method and others suggested by \cite{adams2018estimating, frommer2022multilevel,frommer2023mg} for the matrix case.
\item We believe that our result will be beneficial in areas such as hypergraph spectral theory, quantum computing, and other domains where Hutchinson type estimators have been applied to matrices, but the underlying data is naturally tensor-structured. Thus, a valuable direction for future research is to explore and identify potential application areas where these techniques could provide practical benefits.
\end{inparaenum}

 \bibliographystyle{elsarticle-num-names} 
\bibliography{references}
\end{document}